\newtheorem{lemma}{Lemma}[section]
\newtheorem{theorem}{Theorem}[section]
\newtheorem{definition}{Definition}[section]
\newtheorem{proposition}{Proposition}[section]
\newtheorem{remark}{Remark}[section]
\numberwithin{equation}{section}
\date{
June 1, 2021}
\begin{document}
\title[Hydrostatic approximation and primitive equations justification]{The primitive equations approximation of the anisotropic  horizontally viscous Navier-Stokes equations}
\thanks{$^*$Corresponding author}
\thanks{{\it Keywords}: Primitive equations justification, hydrostatic approximation, anisotropic Navier-Stokes equations, small aspect ratio limit, singular limit}
\thanks{{\it AMS Subject Classification}: 35Q30, 35Q86, 76D05, 86A05, 86A10}
\author[J. Li]{Jinkai Li}
\address[J. Li]{
School of Mathematical Sciences, South China Normal University, Guangzhou 510631, China}
\email{jklimath@m.scnu.edu.cn; jklimath@gmail.com}

\author{Edriss~S.~Titi}
\address[Edriss~S.~Titi]{
Department of Mathematics, Texas A\&M University, 3368 TAMU, College Station,
TX 77843-3368, USA. Department of Applied Mathematics and Theoretical Physics, University of Cambridge, Cambridge CB3 0WA, U.K.
ALSO, Department of Computer Science and Applied Mathematics, Weizmann Institute of Science, Rehovot 76100, Israel.}
\email{titi@math.tamu.edu; Edriss.Titi@damtp.cam.ac.uk}

\author[G. Yuan]{Guozhi Yuan$^*$}
\address[G. Yuan]{
School of Mathematical Sciences, South China Normal University, Guangzhou 510631, China}
\email{shenggaoxii@163.com}

\begin{abstract}
In this paper, we provide rigorous justification of the hydrostatic approximation and the derivation of primitive equations as the small aspect ratio limit of  the incompressible three-dimensional
Navier-Stokes equations  in the anisotropic horizontal viscosity regime.
Setting $\varepsilon >0$ to be the small aspect ratio of the vertical to the horizontal scales of the domain, we investigate the case when the horizontal and vertical viscosities in the
incompressible three-dimensional Navier-Stokes equations are of orders $O(1)$ and $O(\varepsilon^\alpha)$, respectively, with
$\alpha>2$, for which the limiting system is the primitive equations with only horizontal viscosity as $\varepsilon$ tends to zero.
In particular we show that for ``well prepared" initial data the solutions of the scaled incompressible three-dimensional Navier-Stokes equations converge strongly, in
any finite interval of time, to the corresponding solutions of the anisotropic primitive equations with only horizontal viscosities, as $\varepsilon$
tends to zero, and that the convergence rate is of order $O\left(\varepsilon^\frac\beta2\right)$, where $\beta=\min\{\alpha-2,2\}$. Note that this result is different from the
case $\alpha=2$ studied in [Li, J.; Titi, E.S.: \emph{The primitive equations as the small aspect ratio limit of the
Navier-Stokes equations: Rigorous justification of the hydrostatic approximation}, J. Math. Pures Appl., \textbf{124}
\rm(2019), 30--58], where the limiting system is the primitive equations with full viscosities and the convergence is globally in time and its rate of order $O\left(\varepsilon\right)$.
\end{abstract}

\maketitle

\vspace{-5mm}

\allowdisplaybreaks

\section{Introduction}
The hydrostatic approximation is a fundamental assumption in the geophysics and a building block in the large scale
oceanic and atmospheric dynamics, see \cite{Lew,Maj,Ped,Val,Was,Zen}. It can be derived by either the scale analysis
or taking the small aspect ratio limit to the incompressible Navier-Stokes equations.
Thought it is proved to be accurate in the practical applications, the corresponding
rigorous mathematical justification has been only given in the case that
the horizontal and vertical viscosities have some particular orders of the aspect ratio,
see Az$\acute{e}$rad-Guill$\acute{e}$n   \cite{Aze} and Li-Titi \cite{Jli} in the weak and strong setting, respectively.
The aim of the current paper is to investigate the more general case that the horizontal and vertical viscosities are not
necessary to be of the particular order. As shown in the below
that the limiting system considered in the current paper is anisotropic primitive equations with only horizontal viscosities, while those in \cite{Aze,Jli} have full viscosities.

\subsection{Incompressible Navier-Stokes equations in thin domains}
Given a two dimensional domain $M=(0,L_1)\times(0,L_2)$ with $L_1,L_2>0$. Let
$\Omega_\varepsilon^-=M\times(-\varepsilon,0)$ be a three-dimensional box, where $\varepsilon >0$ is small representing the aspect ratio. Consider the anisotropic incompressible
Navier-Stokes equations in $\Omega_\varepsilon^-$
\begin{equation}
\left\{
\begin{aligned}
&\partial_t u+(u\cdot\nabla)u-\mu\Delta_Hu-\nu\partial_z^2u+\nabla p=0,\\
&\nabla\cdot u=0,
\end{aligned}
\right.\label{0}
\end{equation}
where the vector field $u=(v,w)$ representing the velocity, with $v=(v_1,v_2)$, and the scalar
function $p$ representing the pressure are the unknowns, $\mu$ and $\nu$ are the horizontal and vertical viscous
coefficients, respectively. Assume that $\mu=O(1)$ and $\nu=O(\varepsilon^\alpha)$ for some positive $\alpha$, as $\varepsilon \to 0$. The initial-boundary value problem will be studied in this paper and, thus, we complement system (\ref{0}) with the following boundary and initial conditions:
\begin{equation}\label{-}
\left\{
\begin{array}{l}
u~\textrm{and}~p~\textrm{are}~\textrm{periodic}~\textrm{in}~x~\textrm{and}~y,\\
(\partial_zv,w)|_{z=-\varepsilon,0}=(0,0),\\
u|_{t=0}=(v_0,w_0).
\end{array}
\right.
\end{equation}

Note that by extending $v$, $w$, and $p$, respectively, evenly, oddly, and evenly in $z$,
one can extend the initial-boundary value problem (\ref{0})--(\ref{-}) defined in
$\Omega_\varepsilon^-$ to the corresponding
problem defined in the extended domain $\Omega_\varepsilon:=M\times(-\varepsilon,\varepsilon)$.
The extended initial-boundary value problem in $\Omega_\varepsilon:=M\times(-\varepsilon,\varepsilon)$ is as follows
\begin{equation}
\left\{
\begin{aligned}
&\partial_t u+(u\cdot\nabla)u-\mu\Delta_Hu-\nu\partial_z^2u+\nabla p=0,\\
&\nabla\cdot u=0,\\
&v,~w~\textrm{and}~p~\textrm{are}~\textrm{periodic}~\textrm{in}~x,~y~\textrm{and}~z,\\
&v,~w~\textrm{and}~p~\textrm{be}~\textrm{even},~\textrm{odd}~\textrm{and}~\textrm{even}~\textrm{in}~z,\\
&u|_{t=0}=(v_0,w_0).
\end{aligned}
\right.\label{1}
\end{equation}
On the one hand, for any solution $(u,p)$
to (\ref{0})--(\ref{-}), if extending $v$, $w$, and $p$, respectively, evenly, oddly, and evenly in $z$, then the
extension, denoted by $(\tilde u, \tilde p)$, is a solution to (\ref{1}). In the setting of strong solutions, this
can be verified by noticing that extensions as above preserve the Sobolev
regularities of $v$ and $w$ due to the boundary conditions
in (\ref{-}), while in the setting of weak solutions, this is based on the fact that regular testing functions
satisfying the symmetry conditions in (\ref{1}) fulfill the boundary conditions in (\ref{-}) and thus can be chosen
as testing functions for (\ref{0})--(\ref{-}).
On the other hand, if $(u,p)$ is a solution to
(\ref{1}) in $\Omega_\varepsilon$, then the restriction of $(u,p)$ on $\Omega_\varepsilon^-$ is a solution to
(\ref{0})--(\ref{-}). Therefore (\ref{0})--(\ref{-}) is equivalent to (\ref{1}). Due to this equivalence,
one only needs to consider (\ref{1}).

We are interested in the small aspect ratio limit as $\varepsilon\rightarrow0$ to the above system.
Since only the regime of the primitive equations will be considered in the current paper, we assume that $\alpha\geq2$.
In fact, in the case $\alpha\in(0,2)$, one can show in a similar way as in \cite{Bre} that system \eqref{1} converges to a
limiting system with only vertical dissipation, which is different from the primitive equations.

In order to investigate the small aspect ratio limit, we first
carry out some scaling transformation to system \eqref{1} such that
the resulting system is defined on a fixed domain independent of $\varepsilon$. Similar to \cite{Jli}, we define the following new unknowns
\begin{equation*}
\begin{aligned}
&v_\varepsilon(x,y,z,t)=v(x,y,\varepsilon z,t),\qquad w_\varepsilon(x,y,z,t)=\frac{1}{\varepsilon}w(x,y,\varepsilon z,t),\\
&p_\varepsilon(x,y,z,t)=p(x,y,\varepsilon z,t),\qquad u_\varepsilon=(v_\varepsilon,w_\varepsilon),\quad \forall(x,y,z)\in M\times(-1,1).
\end{aligned}
\end{equation*}
Then, $u_\varepsilon$ and $p_\varepsilon$ satisfy the following scaled Navier-Stokes equations
\begin{equation}
\left\{
\begin{aligned}
&\partial_tv_\varepsilon+(u_\varepsilon\cdot\nabla)v_\varepsilon-\Delta_H v_\varepsilon-\varepsilon^{\alpha-2}\partial_z^2v_\varepsilon+\nabla_Hp_\varepsilon=0,\\
&\nabla\cdot u_\varepsilon=0,\\
&\varepsilon^2(\partial_tw_\varepsilon+u_\varepsilon\cdot\nabla w_\varepsilon-\Delta_Hw_\varepsilon-\varepsilon^{\alpha-2}\partial_z^2w_\varepsilon)+\partial_zp_\varepsilon=0,
\end{aligned}
\right.\label{2}
\end{equation}
in the fixed domain $\Omega:=M\times(-1,1)$, subject to
\begin{eqnarray}
&v_\varepsilon,~w_\varepsilon~\textrm{and}~p_\varepsilon~\textrm{are}~\textrm{periodic}~\textrm{in}~x,y,z,\label{3}\\
&v_\varepsilon,~w_\varepsilon~\textrm{and}~p_\varepsilon~\textrm{are}~\textrm{even},~\textrm{odd}~\textrm{and}~\textrm{even}
~\text{in}~z,~\textrm{respectively},\label{4}\\
&(v_\varepsilon,w_\varepsilon)|_{t=0}=(v_0,w_0).\label{5}
\end{eqnarray}
Since system (\ref{2}) preserves the above symmetry, one only needs to impose the required condition on the
initial velocity. Due to this, throughout this paper, we always assume that
\begin{equation}
v_0~\textrm{and}~w_0~\textrm{are}~\textrm{even}~\textrm{and}~\textrm{odd}~\textrm{in}~z,~\textrm{respectively}.\label{initial}
\end{equation}
Throughout this paper, we set $\nabla_H$ and $\Delta_H$ to denote $(\partial_x,\partial_y)$ and $\partial_x^2+\partial_y^2$, respectively. For any $1\leq q\leq \infty$ and positive integer $k$, we denote by $L^q(\Omega)$ and $H^k(\Omega)$, respectively, the standard Lebesgue and Sobolev spaces, and we use the notation $\|\cdot\|_q$ and $\|\cdot\|_{q,M}$ to denote the $L^q(\Omega)$ and $L^q(M)$ norms, respectively.
Since we consider the incompressible Navier-Stokes equations, we use $L^2_\sigma(\Omega)$ to denote the space consisting of all divergence-free functions in $L^2(\Omega)$. It should be emphasized that all the functions considered in this paper are supposed to be periodic in the spatial variables.

By the classic theory, see, e.g., \cite{Con} and \cite{Tem}, for any initial data $u_0\in L^2_\sigma(\Omega)$, there is a global weak solution $u$ to \eqref{2}, subject to \eqref{3} and \eqref{5}. Note that if the initial data $u_0$ satisfies
the symmetry condition (\ref{initial}), then one can construct, in the same way as in
\cite{Con} and \cite{Tem}, such weak solutions that satisfy the additional symmetry
condition (\ref{4}). In fact, in this case, the approximate solutions
satisfy the additional symmetry condition (\ref{4}) and, as a result, the weak solutions achieved as the limits of the approximated solutions also satisfy (\ref{4}). Therefore, for any $u_0\in L^2_\sigma(\Omega)$ satisfying the symmetry condition (\ref{initial}), there is global weak solution $u$ to system (\ref{2}) subject to (\ref{3})--(\ref{5}).
Here the weak solutions are defined as follows.

\begin{definition}
Let $u_0=(v_0,w_0)\in L^2_\sigma(\Omega)$ satisfy the symmetry condition (\ref{initial}). $u$ is called a Leray-Hopf weak solution to system \eqref{2} subject to
\eqref{3}--\eqref{5}, if

(i) $u\in C_w([0,\infty);L_\sigma^2(\Omega))\cap L_{loc}^2([0,\infty);H^1(\Omega))$ is spatially periodic and satisfies the
symmetry condition (\ref{4}), where $C_w$ means weakly continuity;

(ii) The following energy inequality holds:
\begin{equation*}
\begin{aligned}
\|v(t)\|_2^2+\varepsilon^2\|w(t)\|_2^2
&+2\int_0^t \Big(\|\nabla_Hv\|_2^2+\varepsilon^{\alpha-2}\|\partial_zv\|_2^2
+\varepsilon^2\|\nabla_Hw\|_2^2\\
&+\varepsilon^\alpha\|\partial_zw\|_2^2\Big)ds\leq \|v_0\|_2^2+\varepsilon^2\|w_0\|_2^2,\quad\mbox{for a.e.}\,t\in[0,\infty);
\end{aligned}
\end{equation*}

(iii) For any spatially periodic function $\varphi=(\varphi_H,\varphi_3)\in C_0^\infty(\overline{\Omega}\times[0,\infty))$ satisfying $\nabla\cdot \varphi=0$ and the
symmetry condition (\ref{4}), where $\varphi_H=(\varphi_1,\varphi_2)$, the following integral identity holds:
\begin{equation*}
\begin{aligned}
\int_0^\infty\int_\Omega&\Big[-(v\cdot\partial_t\varphi_H+\varepsilon^2w\partial_t\varphi_3)+(u\cdot\nabla)v\varphi_H+\varepsilon^2u\cdot\nabla w\varphi_3\\
&+\nabla_Hv:\nabla_H\varphi_H+\varepsilon^{\alpha-2}\partial_zv\cdot\partial_z\varphi_H+\varepsilon^2\nabla_Hw\cdot\nabla_H\varphi_3+\varepsilon^\alpha\partial_zw\partial_z\varphi_3\Big]d\Omega dt\\
=\int_\Omega& \Big(v_0\cdot\varphi_H(\cdot,0)+\varepsilon^2w_0\varphi_3(\cdot,0)\Big)d\Omega,
\end{aligned}
\end{equation*}
where $d\Omega=dxdydz$.
\end{definition}

\subsection{Small aspect ratio limit and the primitive equations (PEs)}
By taking the formal limit as $\varepsilon\rightarrow0$, it is natural to expect that
(\ref{2}) converges in some suitable sense to the following limiting systems
\begin{equation}
\left\{
\begin{aligned}
&\partial_tv+(u\cdot\nabla)v-\Delta v+\nabla_Hp=0,\\
&\nabla_H\cdot v+\partial_zw=0,\\
&\partial_zp=0,
\end{aligned}
\right.\label{6'}
\end{equation}
if $\alpha=2$ in (\ref{2}), and
\begin{equation}
\left\{
\begin{aligned}
&\partial_tv+(u\cdot\nabla)v-\Delta_Hv+\nabla_Hp=0,\\
&\nabla_H\cdot v+\partial_zw=0,\\
&\partial_zp=0,
\end{aligned}
\right.\label{6}
\end{equation}
if $\alpha>2$ in (\ref{2}),
where the vector field $u=(v,w)$ and the scalar function $p$ are the velocity and pressure, respectively.
Both (\ref{6'}) and (\ref{6}) are the simplest form of the primitive equations (PEs).
Note that in the case $\alpha=2$ the limiting system in (\ref{6'}) has
dissipation in all directions, while in the case $\alpha>2$ the corresponding
system in (\ref{6}) has dissipation only in the horizontal directions.

Recalling that we consider the periodic initial-boundary value problem to the scaled incompressible
Navier-Stokes equations \eqref{2}, it is clear that one should impose the same boundary conditions and symmetry
conditions to the corresponding limiting system (\ref{6}).
However, one only needs to impose the initial condition on the horizontal velocity.
In fact, by \eqref{initial}, $w_0$ is odd and periodic in $z$, one has $w_0|_{z=\pm1}=0$. Then, $w_0$ can
be uniquely determined by the incompressibility condition as
\begin{equation}
w_0(x,y,z)=-\int_{-1}^z\nabla_H\cdot v_0(x,y,\xi)d\xi,\quad \forall (x,y,z)\in\Omega.\label{9}
\end{equation}
We call initial data $(v_0,w_0)$ satisfying condition \eqref{9} {\it well prepared initial data}.

Similarly, $w$ can also be uniquely determined by the incompressibility condition as
\begin{equation}
w(x,y,z,t)=-\int_{-1}^z\nabla_H\cdot v(x,y,\xi,t)d\xi,\quad \forall (x,y,z)\in\Omega.\label{10}
\end{equation}
Due to these facts, throughout this paper, concerning the solutions to (\ref{6}), we only specify the horizontal components $v$, and $w$ is uniquely determined  by (\ref{10}).

The primitive equations, no matter with full or partial dissipation,
play fundamental roles in the geophysical fluid dynamics and, in particular, in the large scale
oceanic and atmospheric dynamics, one can see the books
\cite{Hal,Lew,Maj,Ped,Val,Was,Zen} for the applications and backgrounds of the primitive equations. They are the core in
the weather prediction models. Due to the presence of
strong turbulent mixing in the horizontal direction in the large scale atmosphere, the eddy viscosity in the horizontal
direction is much stronger than that in the vertical direction. As a result, both physically and
mathematically, it is necessary to investigate the primitive equations with anisotropic viscosities and, in particular,
the system that with only horizontal eddy viscosities.

The first systematical studies of the the primitive equation was made by Lions--Temam--Wang
\cite{Lio1,Lio2,Lio3} in the 1990s, where they established the global existence of weak solutions to the system that
with full viscosities; however, the uniqueness of weak solutions is still unclear, even for the two-dimensional case.
By making full use of the hydrostatic balance to exploit the two-dimensional structure of the key part of the pressure
and decomposing the velocity into barotropic and baroclinic components, Cao--Titi \cite{Cao7}
established the global well-posedness of strong solution to the three dimensional primitive equations, see also
Kobelkov \cite{Kob} and Kukavica--Ziane \cite{Kuk}. One can see \cite{Guo,Saa,Ju,Kuk1,Jli1} for the global well-posed results with weaker initial data, and see \cite{Lia} for the results taking the topography effects into considerations.  The global well-posedness results in
\cite{Cao7,Kob,Kuk} are established in the $L^2$ type spaces, for the corresponding results in the $L^p$ type spaces
based on the maximal regularity technique, one can see the works by Hieber et al. \cite{Hie1,Hie2} and Giga et al.
\cite{Gig1,Gig2}. Recently, global well-posedness of strong solutions to the coupled system of the primitive equations
to the moisture system with either one component or multi components of moisture, and the hydrostatic approximation from compressible Navier-Stokes equations to compressible primitive equations were also established, see \cite{Cot,Guo1,Hit1,Hit2} and \cite{Gao,Liu4}, respectively.
For the results of compressible primitive equations, one can see \cite{Liu1,Liu2,Liu3,KAZ,WFC1,WFC2}.

All the results mentioned in the above paragraph are for the system that with full dissipation.
In the last few years, some developments concerning the global well-posedness to the anisotropic primitive equations were
also made, see Cao--Titi \cite{Cao3} and Cao--Li--Titi \cite{Cao1,Cao2,Cao4,Cao5,Cao6}, which in particular imply that
the primitive equations with only horizontal viscosities are globally well-posed as long as one still has either
horizontal or vertical diffusivities, see also \cite{Han} and \cite{Saa}.
Notably, different from the primitive equations with either full viscosity or only
horizontal viscosity, the inviscid primitive equations may develop finite time singularities, see Cao et al. \cite{Tit1},
Wong \cite{Won}, Ghoul et al. \cite{Gho} and Ibrahim et al. \cite{Tit2}.

\subsection{Main results: rigourous justification of hydrostatic approximation}
As already mentioned at the beginning of this introduction, the rigorous justifications of the limiting process
in the case $\alpha=2$, i.e., the convergence from (\ref{2})
with $\alpha=2$ to (\ref{6'}) has been established by Az$\acute{e}$rad-Guill$\acute{e}$n \cite{Aze} in the
weak setting and by Li-Titi \cite{Jli} in the strong setting, respectively, see
also Furukawa et al. \cite{Fur1} and \cite{Fur2} for some generalizations in the $L^p$-$L^q$ type spaces.
To our best knowledge, the corresponding justification in the case $\alpha>2$, i.e., the convergence
from from (\ref{2}) with $\alpha>2$ to (\ref{6}), is still unknown, and we
are going to address this problem in the current paper.

Now, we are ready to state our main results.

We first consider the case that $v_0\in H^1(\Omega)$. In this case, noticing that $u_0$ can be only regarded as an
element in $L^2(\Omega)$ in general, one can only consider the
weak solutions to the anisotropic incompressible Navier-Stokes equations (\ref{2}).
For the primitive equations (\ref{6}), the local well-posed result in \cite{Cao1} guarantees a unique local in time
strong solution and, moreover, it can be extended to be a global one, if one has further that $\partial_zv_0\in L^m(\Omega)$ for some $m>2$.
As a result, we have the following local and global strong convergence results:

\begin{theorem}
\label{thm1}
Suppose that $\alpha>2$. Let $v_0\in H^1(\Omega)$ be a periodic function satisfying $\nabla_H\cdot\int_{-1}^1v_0dz=0$ on $M$. Assume that $v_0$ satisfies the symmetric condition \eqref{initial} and that $w_0$ is determined by \eqref{9}.
Denote by $(v_\varepsilon,w_\varepsilon)$ and $v$ an arbitrary Leray-Hopf weak solution to \eqref{2} and the unique
local strong solution to \eqref{6}, respectively, subject to \eqref{3}--\eqref{5} and with the same initial data $(v_0,w_0)$. Let $t_*$ be the time of existence of
$v$ and set
\begin{equation*}
(V_\varepsilon,W_\varepsilon,P_\varepsilon)=(v_\varepsilon-v,w_\varepsilon-w,p_\varepsilon-p).
\end{equation*}

Then, the following two items hold:

(i) It holds that
\begin{equation*}
\begin{aligned}
\sup_{0\leq t< t^*}\|V_\varepsilon,\varepsilon W_\varepsilon\|_2^2(t)
+\int_0^{t^*}\|\nabla_HV_\varepsilon,\varepsilon\nabla_HW_\varepsilon,\varepsilon^{\frac{\alpha-2}{2}}
\partial_zV_\varepsilon,\varepsilon^{\frac{\alpha}{2}}\partial_zW_\varepsilon\|_2^2\,dt
\leq& C\varepsilon^\beta,
\end{aligned}
\end{equation*}
for any $\varepsilon>0$ and $\alpha>2$, where $\beta:=\min\{2,\alpha-2\}$, and $C$ is a positive constant depending only on $\|v_0\|_{H^1}$, $t^*$, $L_1$ and $L_2$. As a direct consequence, one has
\begin{equation*}
\begin{aligned}
(v_\varepsilon,\varepsilon w_\varepsilon)&\rightarrow(v,0),~~\textrm{in}~L^\infty(0,t^*;L^2(\Omega)),\\
(\nabla_Hv_\varepsilon,\varepsilon^{\frac{\alpha-2}{2}}\partial_zv_\varepsilon,\varepsilon\nabla_Hw_\varepsilon,\varepsilon^{\frac{\alpha}{2}}&\partial_zw_\varepsilon,w_\varepsilon)\rightarrow(\nabla_Hv,0,0,0,w),~~\textrm{in}~L^2(0,t^*;L^2(\Omega)),
\end{aligned}
\end{equation*}
and the convergence rate is of the order $O(\varepsilon^{\frac{\beta}{2}})$.

(ii) Suppose in addition that $\partial_zv_0\in L^m(\Omega)$ for some $m>2$. Then, all the above
convergence and estimate still hold if replacing $t_*$ by any finite time $T\in(0,\infty)$. In particular,
it holds that
\begin{equation*}
\begin{aligned}
\sup_{0\leq t\leq T}\big(\|V_\varepsilon\|_2^2&+\varepsilon^2\|W_\varepsilon\|_2^2\big)(t)+\int_0^{T}
\Big(\|\nabla_HV_\varepsilon\|_2^2+\varepsilon^2\|\nabla_HW_\varepsilon\|_2^2\\
&+\varepsilon^{\alpha-2}\|\partial_zV_\varepsilon\|_2^2+\varepsilon^\alpha\|\partial_zW_\varepsilon\|_2^2\Big)dt\leq K(T)\varepsilon^\beta,
\end{aligned}
\end{equation*}
where $K$ is a nonnegative continuously increasing function on $[0,\infty)$ determined by $\|v_0\|_{H^1}$, $\|\partial_zv_0\|_m$, $L_1$, $L_2$, and $t^*$.
\end{theorem}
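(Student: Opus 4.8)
The plan is to derive the system satisfied by the difference $(V_\varepsilon, W_\varepsilon, P_\varepsilon)$, carry out an $L^2$-energy estimate on $V_\varepsilon$ (weighted by $\varepsilon^2$ on $W_\varepsilon$), and close it with a Gronwall argument. First I would subtract equation \eqref{6} for $(v,w,p)$ from the scaled Navier--Stokes system \eqref{2} for $(v_\varepsilon, w_\varepsilon, p_\varepsilon)$. The horizontal momentum equation for $V_\varepsilon$ will have the form $\partial_t V_\varepsilon + (u_\varepsilon\cdot\nabla)V_\varepsilon + (U_\varepsilon\cdot\nabla)v - \Delta_H V_\varepsilon - \varepsilon^{\alpha-2}\partial_z^2 V_\varepsilon + \nabla_H P_\varepsilon = \varepsilon^{\alpha-2}\partial_z^2 v$, where $U_\varepsilon=(V_\varepsilon,W_\varepsilon)$; the crucial point is that the limiting equation \eqref{6} lacks the $\varepsilon^{\alpha-2}\partial_z^2 v$ term, so it reappears as a forcing term of size $O(\varepsilon^{\alpha-2})$ on the right-hand side. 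The $W_\varepsilon$ equation, carrying the $\varepsilon^2$ prefactor from \eqref{2}, contributes the analogous forcing $\varepsilon^2(\cdots) + \varepsilon^\alpha\partial_z^2 w$ and a pressure-gradient term $\partial_z P_\varepsilon$.

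Next I would test the $V_\varepsilon$-equation with $V_\varepsilon$ and the $W_\varepsilon$-equation with $\varepsilon^2 W_\varepsilon$, integrate over $\Omega$, and add. The divergence-free conditions $\nabla\cdot u_\varepsilon = 0$ and $\nabla\cdot u = 0$ kill the transport terms $\int (u_\varepsilon\cdot\nabla)V_\varepsilon\cdot V_\varepsilon = 0$ and similarly for $W_\varepsilon$, while the pressure terms combine via incompressibility of $U_\varepsilon$ to vanish after integration by parts. This yields an energy identity whose left side is $\tfrac{d}{dt}\tfrac12(\|V_\varepsilon\|_2^2 + \varepsilon^2\|W_\varepsilon\|_2^2)$ plus the dissipation $\|\nabla_H V_\varepsilon\|_2^2 + \varepsilon^{\alpha-2}\|\partial_z V_\varepsilon\|_2^2 + \varepsilon^2\|\nabla_H W_\varepsilon\|_2^2 + \varepsilon^\alpha\|\partial_z W_\varepsilon\|_2^2$. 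On the right I must control (a) the interaction terms $\int (U_\varepsilon\cdot\nabla)v\cdot V_\varepsilon$ and $\varepsilon^2\int(U_\varepsilon\cdot\nabla)w\, W_\varepsilon$, and (b) the forcing terms $\varepsilon^{\alpha-2}\int\partial_z^2 v\cdot V_\varepsilon$ and the lower-order $\varepsilon$-weighted contributions from the $w$-equation.

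The hard part will be estimating the nonlinear interaction terms, since $W_\varepsilon = -\int_{-1}^z \nabla_H\cdot V_\varepsilon\,d\xi$ is a vertical antiderivative of $\nabla_H V_\varepsilon$ and must be handled so that it is absorbed into the horizontal dissipation $\|\nabla_H V_\varepsilon\|_2^2$ rather than into the much weaker $\varepsilon^\alpha\|\partial_z W_\varepsilon\|_2^2$. The standard device, as in Cao--Titi and \cite{Jli}, is to split $(U_\varepsilon\cdot\nabla)v = (V_\varepsilon\cdot\nabla_H)v + W_\varepsilon\partial_z v$ and to integrate the troublesome $W_\varepsilon\partial_z v$ term by parts in $z$, converting $\int W_\varepsilon\,\partial_z v\cdot V_\varepsilon$ into expressions involving $\nabla_H\cdot V_\varepsilon$, $v$, and $\partial_z V_\varepsilon$, and then applying anisotropic Ladyzhenskaya-type inequalities together with Young's inequality to bound everything by $\eta(\|\nabla_H V_\varepsilon\|_2^2 + \varepsilon^{\alpha-2}\|\partial_z V_\varepsilon\|_2^2) + C_\eta(1+\|\nabla v\|\text{-norms})(\|V_\varepsilon\|_2^2 + \varepsilon^2\|W_\varepsilon\|_2^2)$. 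Here the strong-solution regularity of $v$ from \cite{Cao1} furnishes the needed bounds on $\|\partial_z v\|$ and $\|\nabla v\|$; in part (ii), the extra hypothesis $\partial_z v_0\in L^m$ promotes local existence to global-in-time control of these norms, which is exactly what lets one replace $t_*$ by an arbitrary finite $T$. Finally, I would apply Cauchy--Schwarz and Young to the forcing terms, bounding $\varepsilon^{\alpha-2}\int\partial_z^2 v\cdot V_\varepsilon$ by $\tfrac12\|V_\varepsilon\|_2^2 + C\varepsilon^{2(\alpha-2)}\|\partial_z^2 v\|_2^2$ and the $w$-forcing by $C\varepsilon^{4}$-type terms (using $\partial_z w=-\nabla_H\cdot v$), so that after collecting the $\varepsilon$-powers the aggregate forcing is of order $\varepsilon^{\beta}$ with $\beta=\min\{2,\alpha-2\}$. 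A Gronwall inequality in $t$ over $[0,t_*)$ (respectively $[0,T]$) then closes the estimate and yields $\sup_t(\|V_\varepsilon\|_2^2 + \varepsilon^2\|W_\varepsilon\|_2^2) + \int(\text{dissipation})\le C\varepsilon^\beta$, from which the stated strong-convergence rates follow immediately.
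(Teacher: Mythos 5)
Your overall energy-plus-Gronwall strategy matches the paper's in spirit, and your treatment of the nonlinear interaction terms (splitting $(U_\varepsilon\cdot\nabla)v$, integrating the $W_\varepsilon\partial_z v$ term by parts in $z$, and absorbing everything into the \emph{horizontal} dissipation via the anisotropic trilinear estimates of Lemma \ref{lem1}) is exactly right. But there is a genuine gap at the very first step for part (i): there $(v_\varepsilon,w_\varepsilon)$ is only an arbitrary Leray--Hopf \emph{weak} solution, since $v_0\in H^1$ forces $u_0=(v_0,w_0)\in L^2_\sigma(\Omega)$ only. You cannot subtract the two systems and test the $V_\varepsilon$-equation with $V_\varepsilon$: the weak solution is not regular enough to serve as its own test function, $\partial_t v_\varepsilon$ lives only in a dual space, and the energy \emph{identity} for $v_\varepsilon$ (as opposed to the inequality) is unknown, so the cancellation $\int(u_\varepsilon\cdot\nabla)V_\varepsilon\cdot V_\varepsilon\,d\Omega=0$ is not justified. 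The paper circumvents this with a weak--strong (relative energy) argument: by the regularity of the strong solution $v$ from Proposition \ref{pro1}, the pair $(v,w)$ is an admissible test function in the weak formulation of \eqref{2} (Proposition \ref{pro4.1}, via density); one then \emph{adds} the energy identity \eqref{16} for $v$, the energy \emph{inequality} \eqref{17} for the weak solution, and subtracts the two cross-term identities \eqref{15} and Proposition \ref{pro4.1}, obtaining the differential inequality for $\|V_\varepsilon\|_2^2+\varepsilon^2\|W_\varepsilon\|_2^2$ without ever testing the weak solution against itself. Your direct-testing scheme is legitimate only in the $H^2$ setting of Theorem \ref{thm2}, where \eqref{18}--\eqref{20} hold in $L^2$ (and indeed that is how the paper proves Proposition \ref{pro5.1}).

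A second, independent error: you bound the forcing by $\varepsilon^{\alpha-2}\int\partial_z^2v\cdot V_\varepsilon\,d\Omega\leq \frac12\|V_\varepsilon\|_2^2+C\varepsilon^{2(\alpha-2)}\|\partial_z^2v\|_2^2$, but $\|\partial_z^2v\|_2$ is not finite under the hypotheses of Theorem \ref{thm1}. The limiting system \eqref{6} has only horizontal viscosity, so there is no vertical smoothing: Proposition \ref{pro1} controls $\|\partial_z v\|_2$ and $\|\nabla_H\partial_z v\|_2$ but nothing involving two $z$-derivatives, even for $t>0$. The term must instead be handled after one integration by parts in $z$, i.e., as $-\varepsilon^{\alpha-2}\int\partial_z v\cdot\partial_z V_\varepsilon\,d\Omega\leq \frac18\varepsilon^{\alpha-2}\|\partial_z V_\varepsilon\|_2^2+C\varepsilon^{\alpha-2}\|\partial_z v\|_2^2$, which is precisely the paper's estimate of $I_1$ and is the source of the $\varepsilon^{\alpha-2}$ part of $\beta=\min\{2,\alpha-2\}$ (the $\varepsilon^2$ part coming from the $w$-equation terms, including the $\partial_t w$ contribution, which the paper rewrites through $w=-\int_{-1}^z\nabla_H\cdot v\,d\xi$ so that only $\partial_t v\in L^2$ is used). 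Your sketch of part (ii) is correct once these repairs are made: Propositions \ref{pro2} and \ref{pro3} extend the needed a priori bounds on $v$ to any finite $T$, and the same inequality then closes on $[0,T]$.
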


Next, we consider the case that $v_0\in H^2(\Omega)$. In this case, by \eqref{9}, it is clear that
$u_0=(v_0,w_0)\in H^1(\Omega)$. Then, by the local well-posedness theory of strong solutions to the incompressible
Navier-Stokes equations, see, e.g., \cite{Con,Tem}, for each $\varepsilon>0$, there is a unique local strong solution
$(v_\varepsilon,w_\varepsilon)$ to \eqref{2}, subject to \eqref{3}--\eqref{5}. For the primitive equations (\ref{6}), the
global well-posedness results in \cite{Cao1,Cao2} guarantee the global existence of strong solutions to \eqref{6},
subject to \eqref{3}--\eqref{5}. Then, we have the following strong convergence results.

\begin{theorem}
\label{thm2}
In addition to the conditions in Theorem \ref{thm1}, suppose that $v_0\in H^2(\Omega)$.
Denote by $(v_\varepsilon,w_\varepsilon)$ and $v$ the unique local strong solution to \eqref{2} and the unique global
strong solution to \eqref{6}, respectively, subject to \eqref{3}--\eqref{5} and with the same initial data $(v_0,w_0)$. Set
\begin{equation*}
(V_\varepsilon,W_\varepsilon)=(v_\varepsilon-v,w_\varepsilon-w),
\end{equation*}
and let $T_\varepsilon^*$ be the maximal time of existence of $(v_\varepsilon,w_\varepsilon)$.

Then, for any finite time $T>0$ and $\alpha>2$, there is a small positive constant $\varepsilon_T$ depending only on $\|v_0\|_{H^2}$,
$T$, $L_1$ and $L_2$, such that $T_\varepsilon^*>T$, as long as $\varepsilon\in(0,\varepsilon_T)$, and that
\begin{equation*}
\begin{aligned}
\sup_{0\leq t\leq T}\|V_\varepsilon,\varepsilon W_\varepsilon\|_{H^1}^2(t)
+\int_0^{T}\|\nabla_HV_\varepsilon,\varepsilon\nabla_HW_\varepsilon,\varepsilon^{\frac{\alpha-2}{2}}\partial_zV_\varepsilon,\varepsilon^{\frac{\alpha}{2}}\partial_zW_\varepsilon\|_{H^1}^2(t)dt
\leq& K_3(T)\varepsilon^\beta,
\end{aligned}
\end{equation*}
where $\beta=\min\{2,\alpha-2\}$ and $K_3$ is a nonnegative continuously increasing function on $[0,\infty)$
determined only by $\|v_0\|_{H^2}$, $L_1$ and $L_2$. As a consequence, one has
\begin{equation*}
\begin{aligned}
(v_\varepsilon,\varepsilon w_\varepsilon)&\rightarrow(v,0),~~\textrm{in}~L^\infty(0,T;H^1(\Omega)),\\
(\nabla_Hv_\varepsilon,\varepsilon^{\frac{\alpha-2}{2}}\partial_zv_\varepsilon,\varepsilon\nabla_Hw_\varepsilon,\varepsilon^{\frac{\alpha}{2}}&\partial_zw_\varepsilon,w_\varepsilon)\rightarrow(\nabla_Hv,0,0,0,w),~~\textrm{in}~L^2(0,T;H^1(\Omega)),\\
w_\varepsilon&\rightarrow w,~~\textrm{in}~L^\infty(0,T;L^2(\Omega)),
\end{aligned}
\end{equation*}
and the convergence rate is of the order $O(\varepsilon^{\frac{\beta}{2}})$.
\end{theorem}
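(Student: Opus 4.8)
The plan is to prove Theorem \ref{thm2} by deriving uniform-in-$\varepsilon$ $H^1$ estimates on the difference $(V_\varepsilon, W_\varepsilon)$ via energy methods, combined with a continuity/bootstrap argument to push the existence time $T_\varepsilon^*$ past any prescribed finite $T$. Since Theorem \ref{thm1} already supplies the $L^2$-level estimate (the $\sup_t \|V_\varepsilon,\varepsilon W_\varepsilon\|_2^2$ bound together with the horizontal-gradient dissipation integral, all of order $\varepsilon^\beta$), the new content here is the propagation of these bounds to the $H^1$ level, which is exactly what is needed to close the estimate on the Navier-Stokes side where the vertical dissipation degenerates as $\varepsilon^{\alpha-2}\to\infty$ is not available to control $\partial_z$ terms.

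First I would write the system for $(V_\varepsilon, W_\varepsilon, P_\varepsilon)$ by subtracting the primitive equations \eqref{6} from the scaled Navier-Stokes system \eqref{2}. The equation for $V_\varepsilon$ will read schematically
\begin{equation*}
\partial_t V_\varepsilon + (u_\varepsilon\cdot\nabla)V_\varepsilon + (U_\varepsilon\cdot\nabla)v - \Delta_H V_\varepsilon - \varepsilon^{\alpha-2}\partial_z^2 V_\varepsilon + \nabla_H P_\varepsilon = \varepsilon^{\alpha-2}\partial_z^2 v,
\end{equation*}
with $U_\varepsilon=(V_\varepsilon,W_\varepsilon)$, while the $w$-equation contributes the $O(\varepsilon^2)$ prefactored terms that generate the residual $\varepsilon^2(\partial_t w + u\cdot\nabla w - \Delta_H w - \varepsilon^{\alpha-2}\partial_z^2 w)$ as a forcing of size $O(\varepsilon)$ in the appropriate norm. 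The key structural observation, inherited from \cite{Jli}, is that the pressure $P_\varepsilon$ decomposes into the barotropic part (independent of $z$) plus a baroclinic correction of order $\varepsilon^2$, and that the troublesome pressure term can be handled by integrating against divergence-free test directions so it drops out in the basic energy identity. The forcing terms carry the explicit factor yielding $\varepsilon^\beta$ with $\beta=\min\{2,\alpha-2\}$: the term $\varepsilon^{\alpha-2}\partial_z^2 v$ gives $\varepsilon^{\alpha-2}$ and the $w$-residual gives $\varepsilon^2$, so after squaring norms and applying the a priori $H^2$-in-$v$ bounds on the limit solution (guaranteed globally by \cite{Cao1,Cao2}), one obtains source contributions of order $\varepsilon^\beta$.

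Next I would carry out the $H^1$ energy estimate. Testing the $V_\varepsilon$-equation with $-\Delta_H V_\varepsilon$ and with $-\partial_z^2 V_\varepsilon$ (and correspondingly for $W_\varepsilon$ with the $\varepsilon^2$ weights) produces the time derivative of $\|V_\varepsilon,\varepsilon W_\varepsilon\|_{H^1}^2$ plus the dissipation integral appearing in the statement, against which one must dominate the nonlinear interaction terms. The genuinely hard part will be controlling the nonlinearities $(U_\varepsilon\cdot\nabla)v$ and $(u_\varepsilon\cdot\nabla)V_\varepsilon$ at the $H^1$ level when only horizontal dissipation is present: the vertical gradient $\partial_z V_\varepsilon$ is controlled only with the degenerate weight $\varepsilon^{\alpha-2}$, so standard three-dimensional Sobolev interpolation (e.g. $\|f\|_3\lesssim\|f\|_2^{1/2}\|\nabla f\|_2^{1/2}$ applied isotropically) is unavailable. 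I would instead use the anisotropic estimates that are the workhorse of the primitive-equations well-posedness theory — bounding integrals such as $\int |W_\varepsilon|\,|\nabla_H v|\,|\nabla V_\varepsilon|$ by splitting the $z$-integration first (using $W_\varepsilon=-\int_{-1}^z \nabla_H\cdot V_\varepsilon$ from the incompressibility relation \eqref{10}) and then applying the two-dimensional Ladyzhenskaya-type inequality $\|g\|_{L^4(M)}^2\lesssim\|g\|_{L^2(M)}\|g\|_{H^1(M)}$ in the horizontal variables. This anisotropic treatment lets every vertical-derivative factor be matched against a horizontal-gradient factor that is controlled by the genuine $O(1)$ horizontal dissipation, so that the degeneracy in the vertical direction never appears on the ``demand'' side.

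Finally, I would assemble these into a differential inequality of the form
\begin{equation*}
\frac{d}{dt}\|V_\varepsilon,\varepsilon W_\varepsilon\|_{H^1}^2 + c\,\mathcal{D}_\varepsilon(t) \le \big(A(t)+B(t)\|V_\varepsilon,\varepsilon W_\varepsilon\|_{H^1}^2\big)\|V_\varepsilon,\varepsilon W_\varepsilon\|_{H^1}^2 + C\varepsilon^\beta,
\end{equation*}
where $\mathcal{D}_\varepsilon$ is the dissipation, and $A,B$ are integrable in time thanks to the global $H^2$ bounds on $v$ and the Theorem \ref{thm1} estimates; the presence of the quadratic factor $B\|\cdot\|_{H^1}^2$ reflects the genuinely nonlinear, not merely linear, nature of the closure. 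To handle this and simultaneously establish $T_\varepsilon^* > T$, I would run a bootstrap/continuity argument: assume on a maximal subinterval that $\|V_\varepsilon,\varepsilon W_\varepsilon\|_{H^1}^2 \le \varepsilon^{\beta/2}$ (say), so the quadratic term is absorbed, apply Gr\"onwall to the resulting linear inequality to get $\|V_\varepsilon,\varepsilon W_\varepsilon\|_{H^1}^2 \le K_3(T)\varepsilon^\beta$, and then for $\varepsilon$ small enough (smaller than some $\varepsilon_T$) this strictly improves the bootstrap assumption, which by the standard continuation criterion for the strong Navier-Stokes solution forces the solution to persist beyond $T$ with the stated bound. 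The convergence statements and the $O(\varepsilon^{\beta/2})$ rate then follow immediately, and the separate conclusion $w_\varepsilon\to w$ in $L^\infty(0,T;L^2)$ comes from integrating the incompressibility relations \eqref{10} for both $w_\varepsilon$ and $w$ and using the horizontal-gradient control of $V_\varepsilon$.
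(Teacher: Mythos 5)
Your proposal is correct and follows essentially the same route as the paper: subtract the two systems, test with $-\Delta V_\varepsilon$ and $-\varepsilon^2\Delta W_\varepsilon$ (the pressure cancelling because the test field is divergence-free), control the nonlinearities with the anisotropic Cao--Titi trilinear estimates of Lemma \ref{lem1}, identify the $O(\varepsilon^{\alpha-2})$ and $O(\varepsilon^2)$ sources yielding $\varepsilon^\beta$, and close via a smallness bootstrap plus Gr\"onwall and the blow-up criterion to get $T_\varepsilon^*>T$ --- exactly the structure of Propositions \ref{pro5.1} and \ref{pro5.2}. The only cosmetic differences are your $\varepsilon$-dependent bootstrap threshold $\varepsilon^{\beta/2}$ versus the paper's fixed $\sigma^2=\frac{1}{16C_1}$ (both work, since the quadratic term multiplying the dissipation is absorbed either way), and your aside about a barotropic/baroclinic pressure decomposition, which is not needed given the divergence-free cancellation you also invoke.
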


\begin{remark}
Comparing with the results obtained in \cite{Jli}, where the strong convergence and error estimates are
globally in time or in other words uniformly in time for the primitive equations that with full dissipation,
the convergence and error estimates in the current
paper depend on the time intervals in which the problems are considered, as shown in Theorem \ref{thm1} and
Theorem \ref{thm2}. This is caused by the absence of the vertical
viscosity in the primitive equations (\ref{6}) which is treated carefully in the current paper, as both the strong convergence and
error estimates depend crucially on the a priori estimates for the relevant limiting system, i.e., the
primitive equations, while these a priori estimates available for the primite equations (\ref{6}) depend on
the time interval.
\end{remark}

It is interesting to compare the results in the case $\alpha>2$ with those in the case $\alpha=2$.
On the one hand, in the case $\alpha>2$, as shown in Theorem \ref{thm1} and Theorem \ref{thm2}, the
convergence rate $O(\varepsilon^{\frac{\beta}{2}})$, $\beta=\min\{2,\alpha-2\}$, becomes weaker and weaker
when $\alpha$ approaches $2$. On the other hand, in the case $\alpha=2$, the results in \cite{Jli} show
that the corresponding convergence rate is $O(\varepsilon)$. By comparing the results \cite{Jli} in the case $\alpha=2$
and our results, one may expect some better convergence rate, say $O(\varepsilon^{\kappa(\alpha)})$,
such that $\kappa(\alpha)\geq\kappa_0$ for some positive $\kappa_0$ when $\alpha$ approaches $2$.
Unfortunately, this seems impossible, as the following subtracted system for
$(V_\varepsilon, W_\varepsilon)$ has the quantity $\varepsilon^{\alpha-2}\partial_z^2v$
as a source term in the $V_\varepsilon$ equations:
\begin{equation*}
\begin{aligned}
\partial_tV_\varepsilon-\Delta_HV_\varepsilon-\varepsilon^{\alpha-2}\partial_z^2V_\varepsilon
+(U_\varepsilon\cdot\nabla)V_\varepsilon+\nabla_HP_\varepsilon&\\
+(U_\varepsilon\cdot\nabla)v+(u\cdot\nabla)V_\varepsilon=\varepsilon^{\alpha-2}\partial_z^2v&,\label{18}
\end{aligned}
\end{equation*}
\begin{equation*}
\nabla_H\cdot V_\varepsilon+\partial_zW_\varepsilon=0,\label{19}
\end{equation*}
\begin{equation*}
\begin{aligned}
\varepsilon^2(\partial_tW_\varepsilon-\Delta_HW_\varepsilon-\varepsilon^{\alpha-2}\partial_z^2W_\varepsilon
+U_\varepsilon\cdot\nabla W_\varepsilon+U_\varepsilon\cdot\nabla w+u\cdot\nabla W_\varepsilon)&\\
+\partial_zP_\varepsilon=-\varepsilon^2(\partial_tw-\Delta_Hw-\varepsilon^{\alpha-2}\partial_z^2w+u\cdot\nabla w)&. \label{20}
\end{aligned}
\end{equation*}
While in the case $\alpha=2$ as studied in \cite{Jli}, the corresponding subtracted system does not have any source terms
in $V_\varepsilon$ equations. These indicate the essential differences between the cases $\alpha>2$ and $\alpha=2$, or
in other words, the differences of the convergence from the incompressible Navier-Stokes equations to the
isotropic and anisotropic primitive equations.

The rest of this paper is arranged as follows: in section 2, we collect some preliminary results which will be used in the subsequent sections; in section 3, we cite some results about the local and global well-posedness of strong solutions to
the primitive equations with only horizontal viscosity and carry out some a priori estimates; finally, we give the proofs
of Theorem \ref{thm1} and Theorem \ref{thm2} in section 4 and section 5, respectively.

\section{Preliminaries}

The following inequality will be used frequently in the a priori estimates. Since it can be proved exactly in the same way as in \cite{Cao8} and \cite{Cao1}, we omit the proof here.
\begin{lemma}
\label{lem1}
The following trilinear inequalities hold:
\begin{equation*}
\begin{aligned}
&\ \ \  \int_M\left( \int_{-1}^1|\phi(x,y,z)|dz\right)\left(\int_{-1}^1|\varphi(x,y,z)\psi(x,y,z)|dz\right)dxdy\\
&\leq C\|\phi\|_{2}\|\varphi\|_{2}^{\frac{1}{2}}\Big(\|\varphi\|_{2}+\|\nabla_H\varphi\|_{2}\Big)^{\frac{1}{2}}
\|\psi\|_{2}^{\frac{1}{2}}\Big(\|\psi\|_{2}+\|\nabla_H\psi\|_{2}\Big)^{\frac{1}{2}}
\end{aligned}
\end{equation*}
and
\begin{equation*}
\begin{aligned}
&\ \ \  \int_M\left( \int_{-1}^1|\phi(x,y,z)|dz\right)\left(\int_{-1}^1|\varphi(x,y,z)\psi(x,y,z)|dz\right)dxdy\\
&\leq C\|\psi\|_{2}\|\varphi\|_{2}^{\frac{1}{2}}\Big(\|\varphi\|_{2}+\|\nabla_H\varphi\|_{2}\Big)^{\frac{1}{2}}
\|\phi\|_{2}^{\frac{1}{2}}\Big(\|\phi\|_{2}+\|\nabla_H\phi\|_{2}\Big)^{\frac{1}{2}}
\end{aligned}
\end{equation*}
here we still denote $\|\cdot\|_{q}=\|\cdot\|_{L^q(\Omega)}$, for any $\phi$, $\varphi$ and $\psi$, such that the quantities on the right hand sides are finite.
\end{lemma}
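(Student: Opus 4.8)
The plan is to exploit the anisotropic structure of both inequalities by first integrating in the vertical variable $z$ and then invoking the two-dimensional Ladyzhenskaya (Gagliardo--Nirenberg) inequality
\[
\|g\|_{L^4(M)}^2\le C\|g\|_{2,M}\big(\|g\|_{2,M}+\|\nabla_Hg\|_{2,M}\big)
\]
on the horizontal domain $M$. The essential observation is that this two-dimensional inequality involves only the horizontal gradient $\nabla_H$, which is exactly why no vertical derivatives of $\varphi$ or $\psi$ survive on the right-hand sides. The two estimates differ only in which function is singled out to retain its full $L^2$ norm, so I would treat them by symmetric but distinct groupings.

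For the first inequality I would single out $\phi$. Setting $\Phi=\int_{-1}^1|\phi|\,dz$ and $\Psi=\int_{-1}^1|\varphi\psi|\,dz$, the Cauchy--Schwarz inequality in $(x,y)$ bounds the left-hand side by $\|\Phi\|_{2,M}\,\|\Psi\|_{2,M}$, and Cauchy--Schwarz in $z$ gives $\|\Phi\|_{2,M}\le\sqrt2\,\|\phi\|_2$, producing the full-power factor. For $\|\Psi\|_{2,M}$ I would pass to the horizontal slices: Minkowski's integral inequality followed by Cauchy--Schwarz in $(x,y)$ yields $\|\Psi\|_{2,M}\le\int_{-1}^1\|\varphi(\cdot,z)\|_{L^4(M)}\|\psi(\cdot,z)\|_{L^4(M)}\,dz$. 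Applying the planar Ladyzhenskaya inequality slice-by-slice, then Cauchy--Schwarz in $z$, and finally the identities $\int_{-1}^1\|\varphi(\cdot,z)\|_{2,M}^2\,dz=\|\varphi\|_2^2$ and $\int_{-1}^1\|\nabla_H\varphi(\cdot,z)\|_{2,M}^2\,dz=\|\nabla_H\varphi\|_2^2$ (and likewise for $\psi$) delivers the remaining $\tfrac12$-power factors.

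For the second inequality I would instead single out $\psi$. Writing $g(x,y)=\big(\int_{-1}^1|\psi|^2\,dz\big)^{1/2}$, so that $\|g\|_{2,M}=\|\psi\|_2$, and $F(x,y)=\big(\int_{-1}^1|\varphi|^2\,dz\big)^{1/2}$, Cauchy--Schwarz in $z$ gives $\int_{-1}^1|\varphi\psi|\,dz\le g\,F$. Hölder in $(x,y)$ then bounds the left-hand side by $\|g\|_{2,M}\,\|\Phi F\|_{2,M}\le\|\psi\|_2\,\|\Phi\|_{4,M}\,\|F\|_{4,M}$. Here, rather than slicing, I would apply the planar Ladyzhenskaya inequality directly to the vertically integrated quantities $\Phi$ and $F$, which produces $\|\phi\|_2^{1/2}(\|\phi\|_2+\|\nabla_H\phi\|_2)^{1/2}$ and $\|\varphi\|_2^{1/2}(\|\varphi\|_2+\|\nabla_H\varphi\|_2)^{1/2}$, matching the claimed bound.

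The main technical obstacle is controlling the horizontal gradients of these nonsmooth integrated quantities. For this I would use the pointwise bounds $|\nabla_H\Phi|\le\int_{-1}^1|\nabla_H\phi|\,dz$, coming from $|\nabla_H|\phi||\le|\nabla_H\phi|$ almost everywhere, and $|\nabla_HF|\le\big(\int_{-1}^1|\nabla_H\varphi|^2\,dz\big)^{1/2}$, coming from $\nabla_HF=\big(\int_{-1}^1\varphi\,\nabla_H\varphi\,dz\big)/F$ together with Cauchy--Schwarz in $z$. These yield $\|\nabla_H\Phi\|_{2,M}\le\sqrt2\,\|\nabla_H\phi\|_2$ and $\|\nabla_HF\|_{2,M}\le\|\nabla_H\varphi\|_2$, which close the estimates. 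Keeping track of the exponents so that the two non-distinguished functions each appear with balanced $\tfrac12$-powers while the distinguished function retains its full $L^2$ norm is the only remaining point requiring care.
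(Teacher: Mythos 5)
Your proposal is correct and follows essentially the same route as the proof the paper points to: it omits the argument and cites the works of Cao--Titi and Cao--Li--Titi, where these trilinear estimates are proved exactly by your combination of Cauchy--Schwarz/Minkowski in $z$, H\"older on $M$, and the two-dimensional Ladyzhenskaya inequality applied either slice-by-slice or to the vertically integrated quantities. Your handling of the only delicate point---the a.e.\ bounds $|\nabla_H|\phi||\leq|\nabla_H\phi|$ and $|\nabla_H F|\leq\big(\int_{-1}^1|\nabla_H\varphi|^2\,dz\big)^{1/2}$ for the nonsmooth integrated functions---is the standard justification and closes the argument.
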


The following anisotropic Morrey inequality allows to control the H$\ddot{o}$lder norm by using different regularities in different directions.
\begin{lemma}
\label{lem2}
Let $\Omega=M\times(-1,1)$ and let $1\leq p_i<\infty~(i=1,2,3)$ with $\sum_{i=1}^3 p_i^{-1}<1$. Then, we have
\begin{equation*}
|\varphi|_{0,(\lambda_i)}\leq C\sum_{i=1}^3\|D_i\varphi\|_{p_i},~~~~\qquad\lambda_i=\frac{1-\sum_{j=1}^3p_j^{-1}}{1-\sum_{j=1}^3p_j^{-1}+3p_i^{-1}},
\end{equation*}
for any  $\varphi$  such that the quantities on the right hand sides are finite, where $C$ depends on $p_i$ and $\Omega$. Here $(D_1,D_2,D_3)=(\partial_x,\partial_y,\partial_z)$ and
\begin{equation*}
|\varphi|_{0,(\lambda_i)}=\sup_{x\in\bar{\Omega}}|\varphi(x)|+\sup_{x,y\in\bar{\Omega},x\neq y}\frac{|\varphi(x)-\varphi(y)|}{|x-y|^{(\lambda_i)}},
\end{equation*}
where
\begin{equation*}
|x-y|^{(\lambda_i)}=|x_1-y_1|^{\lambda_1}+|x_2-y_2|^{\lambda_2}+|x_3-y_3|^{\lambda_3}.
\end{equation*}
\end{lemma}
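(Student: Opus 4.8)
The plan is to separate $|\varphi|_{0,(\lambda_i)}$ into its supremum part and its anisotropic H\"older seminorm, and to reduce both to a single \emph{oscillation estimate on boxes}. For a rectangular box $Q=\prod_{i=1}^3[a_i,a_i+h_i]\subset\Omega$ write $\mathrm{osc}_Q\varphi=\sup_Q\varphi-\inf_Q\varphi$ and $|Q|=h_1h_2h_3$. The heart of the argument is the scale-dependent bound
\begin{equation*}
\mathrm{osc}_Q\varphi\le C\sum_{i=1}^3 h_i\,|Q|^{-1/p_i}\,\|D_i\varphi\|_{L^{p_i}(Q)}.
\end{equation*}
Granting this, the two terms of the H\"older norm follow by choosing $Q$ appropriately: taking $Q$ comparable to all of $\Omega$ controls $\mathrm{osc}_\Omega\varphi$ and hence, together with the mean value $\varphi_\Omega$, the supremum; taking $Q$ shrinking around a pair of nearby points $x,y$ controls the seminorm.

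First I would establish the box estimate by reduction to the unit cube. On $Q_1=[0,1]^3$ the anisotropic Sobolev--Poincar\'e inequality $\mathrm{osc}_{Q_1}\psi\le C\sum_i\|D_i\psi\|_{L^{p_i}(Q_1)}$ holds precisely because $\sum_i p_i^{-1}<1$; this is the genuinely anisotropic input and may be invoked as a known embedding. Applying it to the rescaled function $\psi(w)=\varphi(h_1w_1,h_2w_2,h_3w_3)$, for which a change of variables gives $\|D_i\psi\|_{L^{p_i}(Q_1)}=h_i|Q|^{-1/p_i}\|D_i\varphi\|_{L^{p_i}(Q)}$, yields exactly the displayed box estimate, with a constant independent of the $h_i$.

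The H\"older exponents $\lambda_i$ then emerge from optimizing the box shape. Given $x,y\in\Omega$ with $r_i:=|x_i-y_i|$, I would look for $Q\supset\{x,y\}$ of the self-similar form $h_i=t^{\mu_i}$, with the $\mu_i$ chosen so that every term in the box estimate carries the same power of $t$, i.e. $\mu_i-p_i^{-1}\sum_j\mu_j\equiv\kappa$ for a common $\kappa>0$; solving the linear system gives $\kappa/\mu_i=\lambda_i$ with exactly the $\lambda_i$ of the statement. Taking $t=\max_i r_i^{1/\mu_i}$ makes $h_i\ge r_i$ (so that $Q$ contains both points) and produces $t^\kappa=\max_i r_i^{\lambda_i}\le|x-y|^{(\lambda_i)}$, whence $|\varphi(x)-\varphi(y)|\le\mathrm{osc}_Q\varphi\le C\,|x-y|^{(\lambda_i)}\sum_i\|D_i\varphi\|_{p_i}$ for nearby points; for points that are far apart the H\"older quotient is bounded directly by the supremum term, already controlled above.

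The main obstacle is the anisotropic oscillation inequality on the unit cube. A naive proof by telescoping $\varphi(x)-\varphi(z)$ into one-dimensional directional integrals and averaging in $z$ breaks down, because the terms in which some coordinates are frozen at the values of $x$ reduce to integrals of $D_i\varphi$ over lower-dimensional slices, which cannot be bounded by the three-dimensional norms $\|D_i\varphi\|_{p_i}$ when $p_i\le 3$. It is exactly here that the sharp hypothesis $\sum_i p_i^{-1}<1$, which is weaker than requiring each $p_i>3$, must be used, through the full anisotropic Gagliardo--Nirenberg--Sobolev machinery rather than a single integration by parts. Two routine technical points remain: fitting the boxes near $\partial\Omega$ is handled by the assumed spatial periodicity, and one checks $0<\lambda_i<1$ directly from $\sum_j p_j^{-1}<1$. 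I also note that, since the right-hand side involves only the derivative norms, the supremum term is controlled only up to the mean value $\varphi_\Omega$ (equivalently, under a zero-mean normalization), which is the form in which the inequality is applied.
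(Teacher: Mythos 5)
Your proposal is correct in substance, but it necessarily takes a different route from the paper, because the paper gives no proof at all: its entire proof of Lemma \ref{lem2} is ``See \cite{Fai}'', deferring to Fain's anisotropic imbedding theorem. You instead reduce the whole statement to one classical black box---the anisotropic Sobolev--Poincar\'e oscillation inequality on the unit cube, valid precisely when $\sum_i p_i^{-1}<1$ (Troisi/Besov--Il'in--Nikol'skii type)---and derive everything else by hand. The mechanics check out: the rescaling identity $\|D_i\psi\|_{L^{p_i}(Q_1)}=h_i|Q|^{-1/p_i}\|D_i\varphi\|_{L^{p_i}(Q)}$ is exact, and your homogeneity system $\mu_i-p_i^{-1}\sum_j\mu_j=\kappa$ does produce the paper's exponents: writing $S=\sum_j\mu_j$ one gets $S\big(1-\sum_j p_j^{-1}\big)=3\kappa$, hence $\mu_i=\kappa\big(1-\sum_j p_j^{-1}+3p_i^{-1}\big)\big/\big(1-\sum_j p_j^{-1}\big)$ and $\kappa/\mu_i=\lambda_i$, so the choice $t=\max_i r_i^{1/\mu_i}$ indeed gives $h_i\geq r_i$ and $t^{\kappa}=\max_i r_i^{\lambda_i}\leq|x-y|^{(\lambda_i)}$; since $\Omega$ is itself a box and the functions are periodic, the box-fitting and far-apart cases are handled as you say, and $0<\lambda_i<1$ follows from $\sum_j p_j^{-1}<1$. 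What your approach buys is transparency: the $\lambda_i$ are no longer an opaque citation but the unique solution of a scaling balance, with the only remaining analytic input being the unit-cube embedding---where, as you rightly warn, the naive telescoping into one-dimensional integrals fails for $p_i\leq 3$, so invoking the known embedding there is legitimate and unavoidable. One caveat: your closing remark is sharper than you present it. As literally stated, the lemma is false for nonzero constants (left-hand side positive, right-hand side zero), so the sup term must be read modulo the mean $\varphi_\Omega$ or with a lower-order norm added; note that the paper applies the lemma to $v(t')$ in the proof of Proposition \ref{pro3} with no such normalization, which is harmless there only because the spatial mean of $v$ is conserved under the periodic primitive equations and is controlled by $\|v_0\|_{H^1}$. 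This is a minor imprecision in the paper's statement, not a gap in your argument.
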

\begin{proof}
See \cite{Fai}.
\end{proof}

\section{Global well-posed of primitive equations with only horizontal viscosities}
The global well-posedness of strong solutions to the  primitive equations with only horizontal viscosities has been established in \cite{Cao1} and \cite{Cao2}. In this section, we improve slightly the result in \cite{Cao1}, see Proposition \ref{pro3}, below.

The following $H^1$ local well-posedness result is proved in \cite{Cao1}.

\begin{proposition}
\label{pro1}
Given a periodic function $v_0\in H^1(\Omega)$ with $\nabla_H\cdot\int_{-1}^1v_0 dz=0$ and satisfying the symmetric condition \eqref{initial}. Then,

(i) There is a unique local strong solution $v$ to \eqref{6}, subject to \eqref{3}--\eqref{5}.

(ii) The local existence time $t^*=\frac{6r_0^2\delta_0^2}{C_0}$, where $C_0$ depends only on $\delta_0$ and $r_0$,  $\delta_0 \in(0,1]$ and $r_0$ are  positive constants such that
\begin{equation*}
\sup_{x^H\in M}\int_{-1}^1\int_{D_{2r_0}(x^H)}|\partial_zv_0|^2~dxdydz\leq \delta_0^2.
\end{equation*}
Here we denote by $x^H$ a point in $\mathbb{R}^2$ and $D_{2r_0}(x^H)$  an open disk in $\mathbb{R}^2$ of radius $2r_0$ centered at $x^H$.

(iii) Moreover, the following estimate holds
\begin{equation*}
\sup_{0\leq t\leq t^*}\|v\|_{H^1}^2+\int_0^{t^*}\Big(\|\nabla_Hv\|_{H^1}^2+\|\partial_tv\|_2^2\Big)dt\leq C,
\end{equation*}
where the positive constant $C$ depends only on $t^*$, $\|v_0\|_{H^1}$, $L_1$ and $L_2$.
\end{proposition}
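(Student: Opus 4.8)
The plan is to construct the solution by a Faedo--Galerkin approximation adapted to the periodicity and symmetry constraints, establish approximation-uniform a priori estimates that are exactly the bounds asserted in item (iii), pass to the limit by Aubin--Lions compactness, and finally prove uniqueness by an energy estimate on the difference of two solutions. The heart of the matter is the a priori estimate, and within it the control of the vertical derivative $\partial_z v$, for which system \eqref{6} provides no dissipation.

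First I would record the basic $L^2$ estimate: testing the momentum equation against $v$, the transport term vanishes by incompressibility, and the pressure term vanishes since $\int_\Omega \nabla_H p\cdot v = \int_\Omega p\,\partial_z w = -\int_\Omega \partial_z p\, w = 0$ by $\partial_z p = 0$ and \eqref{10}, giving $\|v(t)\|_2\le\|v_0\|_2$ and an $L^2_t$ bound on $\nabla_H v$. The crucial object is $\psi:=\partial_z v$. Differentiating the momentum equation in $z$ and using $\partial_z u=(\psi,-\nabla_H\cdot v)$ from incompressibility together with $\partial_z\nabla_H p=0$ yields the \emph{pressure-free} equation
\begin{equation*}
\partial_t\psi + (u\cdot\nabla)\psi - \Delta_H\psi + (\psi\cdot\nabla_H)v - (\nabla_H\cdot v)\psi = 0.
\end{equation*}
Testing against $\psi$, the transport term again vanishes, and one must control the cubic terms $\int_\Omega (\psi\cdot\nabla_H)v\cdot\psi$ and $\int_\Omega(\nabla_H\cdot v)|\psi|^2$. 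These are precisely of the form handled by the anisotropic trilinear inequalities of Lemma \ref{lem1}, which after Young's inequality allow one to absorb $\|\nabla_H\psi\|_2^2$ into the dissipation at the cost of terms carrying a factor of the \emph{localized} $L^2$ mass of $\psi$.

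This localization is the main obstacle. Since $\partial_z v_0\in L^2(\Omega)$, its mass over the vertical cylinders with base $D_{2r_0}(x^H)$ can be made as small as $\delta_0$ by choosing $r_0$ small; one then runs the $\psi$-estimate against a covering of $M$ by such small disks with an associated partition of unity, so that the dangerous cubic terms appear multiplied by the small quantity $\delta_0$. A continuity/bootstrap argument keeps the localized mass of $\psi(t)$ below a fixed multiple of $\delta_0$ on a short interval of length $t^*\sim r_0^2\delta_0^2$, which is exactly the asserted existence time $t^*=6r_0^2\delta_0^2/C_0$; carefully tracking the constants of Lemma \ref{lem1} through Young's inequality is what pins down the explicit value.

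With $\partial_z v$ controlled on $[0,t^*]$, I would close the remaining $H^1$ bound by testing the momentum equation against $-\Delta_H v$: the nonlinear terms are estimated by the anisotropic inequalities using the now-available bound on $\partial_z v$, while the pressure contribution is handled by recalling that $p=p(x,y,t)$ is two-dimensional, so it is determined by the barotropic (vertically averaged) momentum equation and can be estimated accordingly. Summing the $L^2$, $\partial_z v$, and $\nabla_H v$ estimates gives the bound in (iii), uniformly for the approximations, which yields existence in the limit; uniqueness follows by writing the equation for the difference of two solutions and running the same $L^2$-type energy estimate, again invoking Lemma \ref{lem1} for the nonlinear differences and closing by Gr\"onwall's inequality.
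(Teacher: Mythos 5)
Your proposal is not an alternative to the paper's proof so much as a reconstruction of it: the paper disposes of this proposition in one line, citing Theorem 1.1 and Proposition 3.2 of Cao--Li--Titi \cite{Cao1}, and your sketch is essentially an outline of the argument of that cited reference. You have correctly identified its two load-bearing ideas. First, the pressure-free equation for $\psi=\partial_z v$ (which is exactly the equation the present paper writes down in its own Proposition \ref{pro2}, since $u\cdot\nabla\psi=v\cdot\nabla_H\psi-\big(\int_{-1}^z\nabla_H\cdot v\,d\xi\big)\partial_z\psi$). Second, the reason localization is unavoidable: in the global $L^2$ estimate for $\psi$, Lemma \ref{lem1} applied to $\int_\Omega|\nabla_Hv||\psi|^2\,d\Omega$ produces, among others, the term $\|\psi\|_2\|\nabla_H\psi\|_2^2$, which can be absorbed into the horizontal dissipation only when $\|\psi\|_2$ is small; replacing global smallness by smallness of the mass of $\partial_zv_0$ over cylinders with base $D_{2r_0}(x^H)$, combined with a covering, cutoffs, and a continuity argument, is precisely what yields the existence time $t^*\sim r_0^2\delta_0^2$ in \cite{Cao1}.

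One step of your plan would, however, fail as stated: you propose to derive these bounds uniformly along a Faedo--Galerkin approximation, but localized energy estimates are incompatible with spectral projection. The Galerkin solution satisfies only the projected equation, so multiplying it by a cutoff-weighted multiplier such as $\eta^2\partial_zv_n$ generates commutator terms of the form $[P_n,\eta^2]$ acting on the nonlinearity, which are not controlled; the smallness structure over the disks $D_{2r_0}(x^H)$ is simply not visible at the Galerkin level. The cited work avoids this by regularizing the \emph{initial data} instead: one approximates $v_0$ in $H^1$ by $H^2$ data, invokes the $H^2$ well-posedness theory (as in \cite{Cao2}), performs the localized a priori estimates on genuine strong solutions --- where all multiplications are justified since the equation holds in $L^2$ --- obtains an existence time and $H^1$ bounds uniform in the regularization, and passes to the limit; uniqueness then goes through your Grönwall scheme, noting that the dangerous term $\int_\Omega \delta w\,\partial_zv\cdot\delta v\,d\Omega$ requires $\nabla_H\partial_zv\in L^2_t L^2$, which is exactly what part (iii) supplies. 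Also be aware that your closing remark that ``carefully tracking the constants'' pins down $t^*=6r_0^2\delta_0^2/C_0$ understates the matter: that bootstrap, with the coupled localized estimates for $\partial_zv$ and $\nabla_Hv$ and the pressure handled through the barotropic--baroclinic decomposition, is the bulk of the cited paper, not a corollary of Young's inequality.
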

\begin{proof}
This is a direction consequence of Theorem 1.1 and Proposition 3.2 in \cite{Cao1}.
\end{proof}

Note that $\partial_z v$ has higher integrability in $[0,t^*]$, in case it has higher integrability at the initial time. In fact we have the following:
\begin{proposition}
\label{pro2}
Assume in addition to the conditions in Proposition \ref{pro1} that $\partial_zv_0\in L^m(\Omega)$ with $m>2$. Then, it holds that
\begin{equation*}
\sup_{0\leq t\leq t^*}\|\partial_zv\|_m\leq C\|\partial_zv_0\|_m,
\end{equation*}
where $C$ depends only on $m$, $t^*$, $\|v_0\|_{H^1}$, $L_1$ and $L_2$.
\end{proposition}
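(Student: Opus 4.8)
The plan is to derive a closed evolution equation for $\theta:=\partial_zv$ and then run an $L^m$ energy estimate on it. Differentiating the momentum equation in \eqref{6} with respect to $z$, and using the hydrostatic balance $\partial_zp=0$ (so that $\partial_z\nabla_Hp=\nabla_H\partial_zp=0$) together with $\partial_zw=-\nabla_H\cdot v$ from incompressibility, I obtain
\[
\partial_t\theta-\Delta_H\theta+(v\cdot\nabla_H)\theta+w\,\partial_z\theta+(\theta\cdot\nabla_H)v-(\nabla_H\cdot v)\theta=0 .
\]
The crucial structural feature is that the pressure has disappeared, so the $\theta$-equation carries no nonlocal pressure coupling, only a horizontal diffusion, a genuine $3$D transport, and a production term built from $\nabla_Hv$.

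Next I would multiply this equation by $|\theta|^{m-2}\theta$ and integrate over $\Omega$. The time-derivative term gives $\frac1m\frac{d}{dt}\|\theta\|_m^m$, and the horizontal diffusion produces a nonnegative term $D\ge c_m\|\nabla_H(|\theta|^{m/2})\|_2^2$ for some $c_m>0$. The two transport contributions combine into the $3$D advection of $|\theta|^m$ and therefore cancel:
\[
\int_\Omega\big[(v\cdot\nabla_H)\theta+w\,\partial_z\theta\big]\cdot|\theta|^{m-2}\theta\,d\Omega=\frac1m\int_\Omega u\cdot\nabla|\theta|^m\,d\Omega=-\frac1m\int_\Omega(\nabla\cdot u)|\theta|^m\,d\Omega=0,
\]
using $\nabla\cdot u=0$ and periodicity in $z$; this exact cancellation of the advection is what makes the argument close. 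What remains is
\[
\frac1m\frac{d}{dt}\|\theta\|_m^m+D\le C\int_\Omega|\nabla_Hv|\,|\theta|^m\,d\Omega .
\]

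The main obstacle is to control the production term $\int_\Omega|\nabla_Hv|\,|\theta|^m$ using only the horizontal dissipation, since $\theta$ carries no vertical-derivative control. I would treat this anisotropically: setting $g:=|\theta|^{m/2}$, so that $\|g\|_2^2=\|\theta\|_m^m$ and $\nabla_Hg\in L^2$, I slice in $z$ and apply on each horizontal slice the two-dimensional Ladyzhenskaya inequality $\|g(\cdot,z)\|_{L^4(M)}^2\le C\|g(\cdot,z)\|_{L^2(M)}\big(\|g(\cdot,z)\|_{L^2(M)}+\|\nabla_Hg(\cdot,z)\|_{L^2(M)}\big)$, then use Hölder in $z$ and the one-dimensional embedding $\sup_z\|\nabla_Hv(\cdot,z)\|_{L^2(M)}\le C\|\nabla_Hv\|_{H^1(\Omega)}$. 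This yields
\[
\int_\Omega|\nabla_Hv|\,|\theta|^m\,d\Omega\le C\|\nabla_Hv\|_{H^1}\big(\|\theta\|_m^m+\|\theta\|_m^{m/2}\|\nabla_Hg\|_2\big),
\]
after which Young's inequality absorbs $\|\nabla_Hg\|_2^2$ into $D$ at the cost of a factor $\|\nabla_Hv\|_{H^1}^2$.

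Collecting terms then gives the differential inequality $\frac{d}{dt}\|\theta\|_m^m\le Cm\big(\|\nabla_Hv\|_{H^1}+\|\nabla_Hv\|_{H^1}^2\big)\|\theta\|_m^m$, and Grönwall's inequality combined with the bounds $\int_0^{t^*}\|\nabla_Hv\|_{H^1}^2\,dt\le C$ from Proposition \ref{pro1} and $\int_0^{t^*}\|\nabla_Hv\|_{H^1}\,dt\le C$ (Cauchy--Schwarz) closes the estimate on $[0,t^*]$, producing $\sup_{0\le t\le t^*}\|\partial_zv\|_m\le C\|\partial_zv_0\|_m$ with $C$ of the asserted dependence (the $m$-dependence entering through $c_m$). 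The one delicate point is that this computation presupposes the existence of $\partial_z^2v$, which is not guaranteed at the stated regularity; I would make it rigorous either by performing the estimate on the Galerkin approximations underlying Proposition \ref{pro1} and passing to the limit, or by replacing $\partial_z$ with vertical difference quotients, estimating their $L^m$ norm uniformly and letting the step tend to zero.
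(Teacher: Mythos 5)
Your proposal is correct and follows essentially the same route as the paper: an $L^m$ energy estimate on the pressure-free equation for $\partial_z v$, exact cancellation of the transport terms by incompressibility, anisotropic (slice-wise Ladyzhenskaya-type) control of the production term $\int_\Omega |\nabla_H v|\,|\partial_z v|^m\,d\Omega$ absorbed into the horizontal dissipation of $|\partial_z v|^{m/2}$, and Gr\"onwall using the bounds of Proposition \ref{pro1}. The only cosmetic difference is that the paper bounds the production term via the pointwise inequality $|\nabla_H v|\leq \frac12\int_{-1}^1|\nabla_H v|\,dz+\int_{-1}^1|\nabla_H\partial_z v|\,dz$ and the packaged trilinear Lemma \ref{lem1}, whereas you re-derive the same mechanism by hand through $\sup_z\|\nabla_H v(\cdot,z)\|_{L^2(M)}\leq C\|\nabla_H v\|_{H^1(\Omega)}$ and the two-dimensional Ladyzhenskaya inequality, which rests on the identical a priori control; your closing remark on justifying the formal computation by difference quotients or Galerkin approximation is a point the paper leaves implicit.
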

\begin{proof}
Set $v_z=\partial_zv$. Then, $v_z$ satisfies
\begin{equation*}
\partial_tv_z+v_z\cdot\nabla_Hv+v\cdot\nabla_Hv_z-\nabla_H\cdot v v_z-\left(\int_{-1}^z\nabla_H\cdot vd\xi\right)\partial_zv_z-\Delta_Hv_z=0.
\end{equation*}
Multiplying the above by $|v_z|^{m-2}v_z$, $m>2$, and integrating over $\Omega$, it follows from integrating by parts and the incompressibility condition that
\begin{equation*}
\begin{aligned}
\frac{1}{m}\frac{d}{dt}\int_\Omega|v_z|^m &d\Omega+\int_\Omega |v_z|^{m-2}\left(|\nabla_Hv_z|^2+(m-2)\Big|\nabla_H|v_z|\Big|^2\right)d\Omega\\
&=-\int_\Omega \Big(v_z\cdot\nabla_Hv|v_z|^{m-2}v_z-\nabla_H\cdot v v_z|v_z|^{m-2}v_z\Big)d\Omega\\
&\leq 2\int_\Omega |\nabla_Hv||v_z|^md\Omega\\
&\leq \int_M\left(2\int_{-1}^1|\nabla_Hv_z|dz+\int_{-1}^1|\nabla_Hv|dz\right)\left(\int_{-1}^1|v_z|^mdz\right)~dM:=I,
\end{aligned}
\end{equation*}
where the fact that $|\nabla_Hv|\leq \frac{1}{2}\int_{-1}^1|\nabla_Hv|dz+\int_{-1}^1|\nabla_Hv_z|dz$ has been used. It follows from Lemma \ref{lem1} and the Young inequality that
\begin{equation*}
\begin{aligned}
I
&\leq C\big(\|\nabla_Hv_z\|_2+\|\nabla_Hv\|_2\big)\big\||v_z|^{\frac{m}{2}}\big\|_2\big(\big\||v_z|^{\frac{m}{2}}\big\|_2+\big\|\nabla_H|v_z|^{\frac{m}{2}}\big\|_2\big)\\
&\leq\frac{1}{4}\int_\Omega |\nabla_Hv_z|^2|v_z|^{m-2}d\Omega +C\big(1+\|\nabla_Hv_z\|_2^2+\|\nabla_Hv\|_2^2\big)\|v_z\|_m^m.
\end{aligned}
\end{equation*}
As a result, it follows from Gronwall inequality that
\begin{equation*}
\sup_{0\leq t\leq t^*}\|v_z\|_m^m\leq e^{C\int_0^{t^*}(\|\nabla_Hv_z\|_2^2+\|\nabla_Hv\|_2^2+1)dt}\|\partial_zv_0\|_m^m,
\end{equation*}
which leads to the conclusion by Proposition \ref{pro1}.
\end{proof}

Now, we can extend the local strong solution to be a global one as stated in the following proposition. Note that in comparison to the global well-posedness result in \cite{Cao1}, the required condition $v_0\in L^\infty(\Omega)$ in \cite{Cao1} is removed here.
\begin{proposition}
\label{pro3}
Under the assumption of Proposition \ref{pro2}, the unique local strong solution $v$ stated in Proposition \ref{pro1} can be extended uniquely to be a global one such that for any finite time $T\in (0,\infty)$,
\begin{equation*}
\sup_{0\leq t\leq T}\|v\|_{H^1}^2+\int_0^{T}\Big(\|\nabla_Hv\|_{H^1}^2+\|\partial_tv\|_2^2\Big) dt\leq J(T),
\end{equation*}
where $J:[0,\infty)\mapsto\mathbb{R}^+$ is a continuously increasing function determined only by $\|v_0\|_{H^1}$, $\|\partial_zv_0\|_m$, $m$, $t^*$, $L_1$ and $L_2$. Here $t^*$ is given in Proposition \ref{pro1}.
\end{proposition}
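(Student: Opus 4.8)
The plan is to establish, on every finite interval of existence, an a priori bound of the asserted form and then to combine it with a continuation criterion extracted from Proposition~\ref{pro1}. The continuation criterion I would use is that the local existence time of Proposition~\ref{pro1}(ii) depends only on the constants $r_0,\delta_0$ bounding the local-in-disk quantity $\sup_{x^H\in M}\int_{-1}^1\int_{D_{2r_0}(x^H)}|\partial_z v|^2\,dxdydz$; consequently, if this quantity can be kept uniformly controlled on $[0,T]$, the solution may be restarted with a time step bounded below and hence extended up to $T$. By H\"older's inequality one has $\int_{-1}^1\int_{D_{2r_0}(x^H)}|\partial_z v|^2\le C\,r_0^{2(1-2/m)}\|\partial_z v\|_m^2$, so that a bound on $\|\partial_z v\|_m$ renders this local-in-disk norm small uniformly, by choosing $r_0$ small independently of the restart time. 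Thus the whole problem reduces to producing an a priori bound on $\|\partial_z v\|_m$ (and on $\|v\|_{H^1}$) on each finite interval.

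I would assemble three ingredients. First, the basic energy law for \eqref{6}: testing the $v$-equation against $v$ and using $\nabla\cdot u=0$ and $\partial_z p=0$ gives $\frac12\frac{d}{dt}\|v\|_2^2+\|\nabla_H v\|_2^2=0$, hence $\|v(t)\|_2\le\|v_0\|_2$ and $\int_0^T\|\nabla_H v\|_2^2\,dt\le\frac12\|v_0\|_2^2$; this supplies globally integrable coefficients for the Gronwall arguments below. Second, the $L^m$-propagation of Proposition~\ref{pro2}, whose proof shows that $\|\partial_z v(t)\|_m$ remains finite as long as $\int_0^t\|\nabla_H\partial_z v\|_2^2\,ds$ is finite. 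Third, and this is what lets us dispense with the hypothesis $v_0\in L^\infty$ used in \cite{Cao1}, the anisotropic Morrey inequality Lemma~\ref{lem2} with $(p_1,p_2,p_3)=(6,6,m)$, which is admissible because $\frac13+\frac1m<1$ for $m>2$; together with the embedding $H^1(\Omega)\hookrightarrow L^6(\Omega)$ for $\nabla_H v$ it yields the pointwise-in-time bound $\|v(t)\|_\infty\le C(\|v\|_2+\|\nabla_H v\|_{H^1}+\|\partial_z v\|_m)$, thereby replacing an a priori $L^\infty$-control of $v$ by quantities governed by the dissipation and the $L^m$-norm.

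With these ingredients I would close the $H^1$-estimate. The horizontal component $\|\nabla_H v\|_2^2$ is estimated by testing the $v$-equation against $-\Delta_H v$, which produces the dissipation $\|\Delta_H v\|_2^2$ after the pressure term is handled by incompressibility; the vertical component $\|\partial_z v\|_2^2$ is estimated by testing the equation for $v_z:=\partial_z v$ (the one displayed in the proof of Proposition~\ref{pro2}) against $v_z$, the divergence-free structure again removing the leading transport terms. The surviving nonlinear contributions are of the schematic types $\int|\nabla_H v|\,|v_z|^2$ and $\int|v|\,|v_z|\,|\nabla_H v_z|$, together with their once-horizontally-differentiated analogues. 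These I would control by combining the anisotropic trilinear inequality Lemma~\ref{lem1} (after the pointwise reduction $|\nabla_H v|\le\frac12\int_{-1}^1|\nabla_H v|\,dz+\int_{-1}^1|\nabla_H v_z|\,dz$) with the $L^\infty$-bound of Lemma~\ref{lem2} and the $L^m$-control of $v_z$, arranging each factor of $\|\nabla_H v\|_{H^1}$ or $\|\nabla_H v_z\|_2$ at a power that can be absorbed into the left-hand dissipation or paired with the globally integrable coefficient $\|\nabla_H v\|_2^2$. Gronwall's inequality then gives $\sup_{[0,T]}\|v\|_{H^1}^2+\int_0^T(\|\nabla_H v\|_{H^1}^2+\|\partial_t v\|_2^2)\,dt\le J(T)$ with $J$ continuous and increasing, the term $\|\partial_t v\|_2$ being recovered directly from the equation.

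The main obstacle, and the genuinely delicate point, is exactly this last closing: since \eqref{6} carries no vertical viscosity, the single quantity $\int_0^T\|\nabla_H\partial_z v\|_2^2\,dt$ is at once the only dissipation available to absorb the vertical-transport nonlinearity $w\,\partial_z v=-\bigl(\int_{-1}^z\nabla_H\cdot v\,d\xi\bigr)\partial_z v$ and the quantity whose finiteness is required to propagate the $L^m$-bound of Proposition~\ref{pro2}. I would break this apparent circularity by running the $L^m$- and $H^1$-estimates simultaneously as a continuity (bootstrap) argument on the maximal interval of existence $[0,T^{**})$: assuming $T^{**}<\infty$, the combined Gronwall inequality, fed by the globally finite $L^2$-dissipation, forces both $\sup_{[0,T^{**})}\|\partial_z v\|_m$ and $\int_0^{T^{**}}\|\nabla_H\partial_z v\|_2^2\,dt$ to be finite, whereupon the continuation criterion of the first paragraph extends the solution beyond $T^{**}$, a contradiction. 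The decisive structural fact is that the $L^m$-integrability of $\partial_z v$ with $m>2$ is strictly stronger than the $L^2$-bound the energy method alone would give, yet strictly weaker than the $L^\infty$-hypothesis of \cite{Cao1}, and it is precisely this margin that makes the nonlinear terms absorbable through Lemmas~\ref{lem1} and \ref{lem2}.
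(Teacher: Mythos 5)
Your plan is correct in spirit at its periphery (the restart criterion via H\"older with small $r_0$, and the use of Lemma~\ref{lem2} to manufacture an $L^\infty$ bound) but has a genuine gap at its center: the closing of the $H^1$ estimate for the horizontally viscous system, which you describe only schematically, is not a routine Gronwall exercise — it amounts to re-proving the global well-posedness theorem of \cite{Cao1} under weakened hypotheses, and as sketched it fails in two concrete places. First, the $L^m$-propagation of Proposition~\ref{pro2} is driven by the Gronwall factor $e^{C\int_0^t(\|\nabla_Hv_z\|_2^2+\|\nabla_Hv\|_2^2+1)\,ds}$, which requires the \emph{first-order} dissipation $\int_0^t\|\nabla_H\partial_zv\|_2^2\,ds$ and not merely the basic energy dissipation $\int_0^t\|\nabla_Hv\|_2^2\,ds$; so your assertion that the combined inequality is ``fed by the globally finite $L^2$-dissipation'' is incorrect, and the circularity between the $L^m$ bound and the $H^1$ dissipation is not actually broken by the continuity argument — a bootstrap needs a closed differential inequality whose solution provably survives to time $T$, which you never produce. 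Second, your pointwise-in-time bound $\|v(t)\|_\infty\leq C\big(\|v\|_2+\|\nabla_Hv\|_{H^1}+\|\partial_zv\|_m\big)(t)$ contains $\|\nabla_Hv\|_{H^1}(t)$, which is only square-integrable in time and is itself part of the dissipation being estimated; inserting it into a term such as $\int_\Omega|v||v_z||\nabla_Hv_z|\,d\Omega$ produces a product of two dissipation factors against $\|v_z\|_2$, of the schematic form $\|\nabla_H\partial_zv\|_2^2\,\|v_z\|_2$, which Young's inequality cannot absorb unless $\|v_z\|_2$ is a priori small and Gronwall cannot handle without a priori integrability of the very dissipation in question. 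Without the additional cancellations and the dedicated $\|v\|_\infty$-propagation machinery of \cite{Cao1}, the inequality one obtains is of Riccati type $y'\lesssim y^2$ and yields only local-in-time control — exactly the obstruction you identify but do not overcome.

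The paper's actual proof sidesteps the entire difficulty with a short smoothing-and-restart argument. By (iii) of Proposition~\ref{pro1}, $\int_{t^*/2}^{t^*}\big(\|\nabla_H^2v\|_2^2+\|\nabla_H\partial_zv\|_2^2\big)\,dt\leq C$, so by pigeonhole there is a time $t'\in(\frac{t^*}{2},t^*)$ with $\|\nabla_H^2v\|_2^2(t')+\|\nabla_H\partial_zv\|_2^2(t')\leq C/t^*$; Sobolev embedding then gives $\|\nabla_Hv\|_6(t')\leq C/t^*$, and Lemma~\ref{lem2} applied with $(p_1,p_2,p_3)=(6,6,2)$ — rather than your $(6,6,m)$ — yields $v(t')\in L^\infty(\Omega)$ \emph{at the single time $t'$}, where the otherwise-uncontrolled factor $\|\nabla_Hv\|_{H^1}$ happens to be finite. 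Since moreover $v(t')\in H^1(\Omega)$ and $\partial_zv(t')\in L^m(\Omega)$ by Propositions~\ref{pro1} and~\ref{pro2}, one views $t'$ as the initial time and invokes the global well-posedness theorem of \cite{Cao1} verbatim, whose hypothesis $v_0\in L^\infty\cap H^1$ with $\partial_zv_0\in L^m$ is now met; no new global energy estimates are performed at all. In short, you use the same two lemmas as the paper but attempt to rebuild the global theory of \cite{Cao1} without its $L^\infty$ assumption, whereas the paper's observation is precisely that this assumption can be manufactured at a positive time by parabolic smoothing and the known theorem then applied — a step your proposal is missing and which, if supplied, would make most of your third and fourth paragraphs unnecessary.
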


\begin{proof}
Due to (iii) of Proposition \ref{pro1}, it has
\begin{equation*}
\int_{\frac{t^*}{2}}^{t^*}\Big(\|\nabla_H^2v\|_2^2+\|\nabla_H\partial_zv\|_2^2\Big)dt\leq C.
\end{equation*}
Choose a time $t'\in (\frac{t^*}{2},t^*)$ such that
\begin{equation*}
\|\nabla_H^2v\|_2^2(t')+\|\nabla_H\partial_zv\|_2^2(t')\leq \frac{C}{t^*}.
\end{equation*}
By the Sobolev imbedding inequality, this implies  $\|\nabla_Hv\|_6(t')\leq \frac{C}{t^*}$. Thanks to this and applying Lemma \ref{lem2} with $p_1=p_2=6$ and $p_3=2$, one obtains
\begin{equation*}
\sup_{x\in\bar{\Omega}}|v(x,t')|\leq C(2\|\nabla_Hv\|_6(t')+\|\partial_zv\|_2(t'))\leq\frac{C}{t^*},
\end{equation*}
 and in particular $v(t')\in L^\infty(\Omega)$. With the aid  of this and by (iii) of Proposition \ref{pro1} and Proposition \ref{pro2}, one has $v|_{t=t'}\in L^\infty(\Omega)\cap H^1(\Omega)$ and $\partial_zv|_{t=t'}\in L^m(\Omega)$. As a result, by viewing $t'$ as the initial time, one can apply the result in \cite{Cao1} to extend the local solution $v$ uniquely to be a global one and the corresponding estimate as stated in Proposition \ref{pro3} holds. The proof is complete.

\end{proof}

Finally, for the $H^2$ initial data, the following global well-posedness and a priori estimate are cited from \cite{Cao2}.
\begin{proposition}
\label{pro4}
Given a periodic function $v_0\in H^2(\Omega)$ with $\nabla_H\cdot\int_{-1}^1v_0 dz=0$
and satisfying the symmetric condition \eqref{initial}. Then, there is a unique global strong solution $v$ to \eqref{6}, subject to \eqref{3}--\eqref{5} and the following estimate holds
\begin{equation*}
\sup_{0\leq t\leq T}\|v\|_{H^2}^2+\int_0^{T}\Big(\|\nabla_Hv\|_{H^2}^2+\|\partial_tv\|_{H^1}^2\Big) dt\leq G(T),
\end{equation*}
where $G(T)$ is a continuously increasing function determined only by $\|v_0\|_{H^2}$, $L_1$ and $L_2$.
\end{proposition}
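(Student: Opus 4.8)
The plan is to treat this as a propagation-of-regularity statement erected on top of the global $H^1$ theory already in hand from Proposition \ref{pro3}. First I would observe that the hypotheses here are strictly stronger than those of the previous propositions: since $v_0\in H^2(\Omega)$ we have $v_0\in H^1(\Omega)$ and, by the three-dimensional Sobolev embedding $H^1(\Omega)\hookrightarrow L^6(\Omega)$, also $\partial_z v_0\in H^1(\Omega)\hookrightarrow L^6(\Omega)$. Thus the assumptions of Proposition \ref{pro2} and Proposition \ref{pro3} hold with $m=6$, and they already furnish a unique global strong solution $v$ together with the quantitative $H^1$ bound and the uniform control $\sup_{[0,t^*]}\|\partial_z v\|_6\le C\|\partial_z v_0\|_6$. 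The entire remaining task is therefore to promote these $H^1$ bounds to the stated $H^2$ estimate, uniformly on any finite interval $[0,T]$; uniqueness in this class follows from a standard energy estimate on the difference of two solutions, again via the anisotropic inequalities of Lemma \ref{lem1}.

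The body of the argument would be a hierarchy of energy estimates for the spatial derivatives of $v$. I would differentiate \eqref{6} once in the horizontal variables and test with $-\Delta_H v$ to control $\nabla_H^2 v$ and $\nabla_H\partial_z v$ in $L^2$, and in parallel work with the equation for $v_z=\partial_z v$ derived in the proof of Proposition \ref{pro2}, testing it against $-\Delta_H v_z$ to recover the mixed and vertical second-order quantities. Throughout, the pressure is eliminated using $\partial_z p=0$ together with the barotropic (vertically averaged) mode of the horizontal momentum equation, while $w$ and $\partial_z w$ are replaced by $-\int_{-1}^z\nabla_H\cdot v\,d\xi$ and $-\nabla_H\cdot v$ through the incompressibility constraint \eqref{10}. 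The nonlinear advection terms would be bounded by repeatedly invoking the anisotropic trilinear inequalities of Lemma \ref{lem1}, which trade a vertical integral of one factor against horizontal $H^1$ control of the others, and the anisotropic Morrey inequality of Lemma \ref{lem2} wherever an $L^\infty$ bound on $v$ is required. Once all second-order quantities are collected into an energy $\mathcal{E}(t)$ satisfying a differential inequality of the form $\frac{d}{dt}\mathcal{E}(t)\le \big(1+\|\nabla_H^2 v\|_2^2+\|\nabla_H\partial_z v\|_2^2\big)\mathcal{E}(t)+(\text{lower order})$, Gronwall's inequality together with the already-integrable $H^1$ quantities from Proposition \ref{pro3} closes the estimate and produces the continuously increasing bound $G(T)$; the time-derivative control $\int_0^T\|\partial_t v\|_{H^1}^2\,dt$ is then read off the equation.

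The hard part, and the reason the statement is nontrivial, is the complete absence of vertical dissipation: no term in \eqref{6} supplies an a priori bound on $\partial_z^2 v$, so the vertical-advection contribution $\big(\int_{-1}^z\nabla_H\cdot v\,d\xi\big)\partial_z^2 v$ cannot be absorbed by a viscous term as it would be in the fully viscous case. The crucial device is the favorable algebraic structure of the $v_z$ equation: the stretching term $-(\nabla_H\cdot v)v_z$ generated by differentiating the advection in $z$ conspires, after integration by parts using $\partial_z w=-\nabla_H\cdot v$, with the vertical transport term so that the dangerous contribution either cancels or is dominated by the horizontal dissipation together with the a priori $L^m$ control of $\partial_z v$. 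Carrying out this cancellation quantitatively at the $H^2$ level while keeping every constant under the control of $\|v_0\|_{H^2}$ and the domain parameters alone is the delicate point; it is precisely this mechanism that is executed in \cite{Cao2}, from which the proposition is quoted.
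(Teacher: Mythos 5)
The paper does not actually prove this proposition: it is quoted verbatim from \cite{Cao2} (the global $H^2$ theory for the primitive equations with only horizontal viscosity), so your proposal of an independent propagation-of-regularity argument on top of Proposition \ref{pro3} is a genuinely different route, and your opening reduction is sound --- $v_0\in H^2(\Omega)$ gives $\partial_z v_0\in H^1(\Omega)\hookrightarrow L^6(\Omega)$, so Propositions \ref{pro2} and \ref{pro3} with $m=6$ already supply the unique global solution with $H^1$ bounds, leaving only the $H^2$ estimate. However, your energy hierarchy has a concrete hole: neither of the two tests you propose ever controls $\|\partial_z^2 v\|_2$, which is part of the $H^2$ norm. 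Testing the horizontally differentiated equation against $-\Delta_H v$ controls $\nabla_H^2 v$, and testing the $v_z$ equation against $-\Delta_H v_z$ controls $\nabla_H\partial_z v$ in $L^\infty_t L^2$ together with the dissipation $\|\Delta_H v_z\|_2^2$ --- but it does \emph{not} ``recover the vertical second-order quantities'' as you assert, since $-\Delta_H$ produces only horizontal derivatives of $v_z$. The missing piece is a separate $L^2$ energy estimate for $v_{zz}=\partial_z^2 v$: differentiate the $v_z$ equation once more in $z$ and test with $v_{zz}$ itself. There is no vertical dissipation to absorb anything, but the equation still carries $-\Delta_H v_{zz}$, so the dissipation $\|\nabla_H v_{zz}\|_2^2$ is available; the vertical transport term $\bigl(\int_{-1}^z\nabla_H\cdot v\,d\xi\bigr)\partial_z v_{zz}$ integrates by parts against the stretching terms exactly as in the proof of Proposition \ref{pro2} (this is the cancellation you correctly identify, but you deploy it at the wrong level), and the remaining trilinear terms such as $\int (v_z\cdot\nabla_H v_z)\cdot v_{zz}\,d\Omega$ are handled by Lemma \ref{lem1} and Young's inequality, with Gronwall coefficients integrable by Proposition \ref{pro3}. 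Without this third estimate your energy $\mathcal{E}(t)$ simply does not contain $\|\partial_z^2 v\|_2^2$ and the stated conclusion does not follow.

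A second, smaller issue concerns the dependence of $G(T)$ on $\|v_0\|_{H^2}$ alone, which you flag as delicate but do not resolve within your own route. Proposition \ref{pro3} yields $J(T)$ depending on $t^*$, and $t^*=6r_0^2\delta_0^2/C_0$ from Proposition \ref{pro1} depends a priori on the \emph{profile} of $|\partial_z v_0|^2$ (through the choice of $r_0$ and $\delta_0$), not merely on a norm. Your reduction can be repaired: since $\partial_z v_0\in L^6(\Omega)$, H\"older gives
\begin{equation*}
\int_{-1}^1\int_{D_{2r_0}(x^H)}|\partial_z v_0|^2\,dx\,dy\,dz
\leq C\,\|\partial_z v_0\|_6^2\, r_0^{4/3},
\end{equation*}
so one may take $\delta_0=1$ and $r_0$ determined only by $\|v_0\|_{H^2}$, bounding $t^*$ from below by a function of $\|v_0\|_{H^2}$, $L_1$, $L_2$ only. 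With this fix and the added $v_{zz}$ estimate, your alternative proof closes; what the paper's citation to \cite{Cao2} buys is precisely that both of these points (the vertical second-derivative estimate and the norm-only dependence of the constants) are carried out there in full.
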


\section{Proof of Theorem \ref{thm1}}

Since $v_0\in H^1(\Omega)$ and recalling (\ref{9}), the initial data $u_0=(v_0,w_0)$ can only be regarded as an element of $L^2_\sigma(\Omega)$.
Thus, one needs to consider the weak form of the scaled Navier-Stokes equations \eqref{2}. By Proposition \ref{pro1}, the
unique solution $v$ to (\ref{6}), subject to (\ref{3})--(\ref{5}) has the regularities $v\in L^\infty(0,t^*;H^1(\Omega))$,
$\partial_tv\in L^2(0,t^*;L^2(\Omega))$, and $\nabla_Hv\in L^2(0,t^*;H^1(\Omega))$. Thanks to these facts, by virtue of a density
argument, one can check that $(v,w)$ can be chosen as testing function in the weak form in (iii) of Definition 1.1. As a result, we have the following proposition.

\begin{proposition}
\label{pro4.1}
Given a periodic function $v_0\in H^1(\Omega)$ with $\nabla_H\cdot\int_{-1}^1v_0 dz=0$ and satisfying the symmetric condition \eqref{initial}. Let $(v_\varepsilon,w_\varepsilon)$  an arbitrary Leray-Hopf weak solution to \eqref{2} and $v$  the unique local strong solution to \eqref{6} ,  subject to \eqref{3}--\eqref{5}. Then, the following integral equality holds
\begin{equation*}
\begin{aligned}
&-\frac{\varepsilon^2}{2}\|w(t_0)\|_2^2+\left[\int_\Omega\Big( v_\varepsilon\cdot v+\varepsilon^2 w_\varepsilon w\Big)~d\Omega\right](t_0)-\int_0^{t_0}\int_\Omega v_\varepsilon \partial_t v~d\Omega dt\\
&+\int_0^{t_0}\int_\Omega\Big(\nabla_Hv_\varepsilon:\nabla_Hv+\varepsilon^{\alpha-2}\partial_zv_\varepsilon\cdot \partial_zv+\varepsilon^2\nabla_Hw_\varepsilon\cdot\nabla_Hw+\varepsilon^\alpha\partial_zw_\varepsilon\partial_zw\Big)d\Omega dt\\
=&\|v_0\|_2^2+\frac{\varepsilon^2}{2}\|w_0\|_2^2+\varepsilon^2\int_0^{t_0}\int_\Omega\nabla_HW_\varepsilon\cdot\left(
\int_{-1}^z\partial_tvd\xi\right) d\Omega dt\\
&-\int_0^{t_0}\int_\Omega\Big((u_\varepsilon\cdot\nabla)v_\varepsilon v+\varepsilon^2u_\varepsilon\cdot w_\varepsilon w\Big)d\Omega dt,
\end{aligned}
\end{equation*}
for any $t_0\in[0,t^*]$, where $t^*$ is the time of existence of $v$.
\end{proposition}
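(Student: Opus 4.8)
The plan is to derive the asserted identity by choosing the strong solution $(v,w)$ of the primitive equations (\ref{6}) --- with $w$ reconstructed from $v$ through (\ref{10}) --- as a test function in the weak formulation (iii) of Definition 1.1 satisfied by the Leray--Hopf weak solution $(v_\varepsilon,w_\varepsilon)$, and then to localize the resulting time integral to $[0,t_0]$. First I would observe that $(v,w)$ is admissible in the spirit of the test class: by (\ref{10}) it is divergence free, it obeys the symmetry condition (\ref{4}), and, by Proposition \ref{pro1}, it enjoys $v\in L^\infty(0,t^*;H^1(\Omega))$, $\nabla_Hv\in L^2(0,t^*;H^1(\Omega))$ and $\partial_tv\in L^2(0,t^*;L^2(\Omega))$. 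Since the genuine test functions are required to be smooth and compactly supported in time, I would approximate $(v,w)$ by a smooth, divergence-free, symmetric sequence and recover $(v,w)$ in the limit using these regularities. The only term not immediately controlled this way is the one containing $\partial_tw=-\nabla_H\cdot\int_{-1}^z\partial_tv\,d\xi$, which I would handle by integrating the horizontal derivative by parts \emph{before} passing to the limit, so that only $\partial_tv\in L^2L^2$ is needed.

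Second, to restrict the time interval to $[0,t_0]$, I would multiply the test function by a Lipschitz temporal cutoff $\theta_h(t)$ that equals $1$ on $[0,t_0]$ and decreases linearly to $0$ on $[t_0,t_0+h]$, and let $h\to0$. In the time-derivative terms $\partial_t\theta_h$ converges to $-\delta_{t_0}$, and the weak continuity $u_\varepsilon\in C_w([0,\infty);L^2_\sigma(\Omega))$ guarantees that the corresponding boundary contributions converge to the instantaneous value $\big[\int_\Omega(v_\varepsilon\cdot v+\varepsilon^2w_\varepsilon w)\big](t_0)$, while the initial contribution on the right-hand side of the weak formulation produces $\|v_0\|_2^2+\varepsilon^2\|w_0\|_2^2$ because $(v,w)|_{t=0}=(v_0,w_0)$.

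The crucial algebraic step concerns the vertical component. In the term $-\varepsilon^2\int_0^{t_0}\!\int_\Omega w_\varepsilon\,\partial_tw$ I would substitute $w_\varepsilon=W_\varepsilon+w$. The diagonal part $-\varepsilon^2\int_0^{t_0}\!\int_\Omega w\,\partial_tw=-\tfrac{\varepsilon^2}{2}\int_0^{t_0}\tfrac{d}{dt}\|w\|_2^2\,dt$ produces exactly $-\tfrac{\varepsilon^2}{2}\|w(t_0)\|_2^2$ together with $+\tfrac{\varepsilon^2}{2}\|w_0\|_2^2$, the latter combining with the $\varepsilon^2\|w_0\|_2^2$ from the initial data to leave the stated coefficient $\tfrac{\varepsilon^2}{2}\|w_0\|_2^2$. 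For the remaining part $-\varepsilon^2\int_0^{t_0}\!\int_\Omega W_\varepsilon\,\partial_tw$, I would insert $\partial_tw=-\nabla_H\cdot\int_{-1}^z\partial_tv\,d\xi$ and integrate by parts horizontally (no boundary terms by periodicity), obtaining the source-like term $+\varepsilon^2\int_0^{t_0}\!\int_\Omega\nabla_HW_\varepsilon\cdot\big(\int_{-1}^z\partial_tv\,d\xi\big)$ on the right-hand side. Collecting the viscous terms on the left, the nonlinear terms on the right, and the boundary and initial data contributions as above then yields the identity.

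I expect the main obstacle to be the rigorous justification of using $(v,w)$ as a test function: one must simultaneously deal with its limited regularity --- in particular that $\partial_tw$ is not a priori an $L^2$ function, so the horizontal integration by parts must be performed at the level of the smooth approximations --- with its non-compact temporal support (handled by the cutoff $\theta_h$), and with the mere weak-in-time continuity of the Leray--Hopf solution, which is precisely what legitimizes evaluating $\int_\Omega(v_\varepsilon\cdot v+\varepsilon^2w_\varepsilon w)$ at the single time $t_0$.
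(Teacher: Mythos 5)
Your proposal is correct and takes essentially the same route as the paper, whose proof is delegated to Proposition 4.1 of \cite{Jli}: there, as in the paragraph preceding the statement here, one justifies taking $(v,w)$ (with $w$ given by \eqref{10}) as a test function in Definition 1.1 by a density argument based on the regularities $v\in L^\infty(0,t^*;H^1(\Omega))$, $\nabla_Hv\in L^2(0,t^*;H^1(\Omega))$, $\partial_tv\in L^2(0,t^*;L^2(\Omega))$ from Proposition \ref{pro1}, localizes in time by a cutoff together with the weak continuity of the Leray--Hopf solution to evaluate at $t_0$, and treats the $\partial_tw$ term by the horizontal integration by parts producing $\varepsilon^2\int_0^{t_0}\int_\Omega\nabla_HW_\varepsilon\cdot\big(\int_{-1}^z\partial_tv\,d\xi\big)\,d\Omega\,dt$. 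Your splitting $w_\varepsilon=W_\varepsilon+w$ and the resulting bookkeeping of the $\tfrac{\varepsilon^2}{2}\|w_0\|_2^2$ and $-\tfrac{\varepsilon^2}{2}\|w(t_0)\|_2^2$ terms reproduce the stated identity exactly as in that argument.
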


\begin{proof}
The proof is exactly the same as in Proposition 4.1 of \cite{Jli} and, thus, it is omitted here.
\end{proof}

\begin{remark}
If we further assume that $\partial_zv_0\in L^m(\Omega)$, $m>2$, then by Proposition \ref{pro3}, for any finite time $T>0$, we can obtain the unique strong solution $v$ in $[0,T]$ to  \eqref{6}, and the result in Proposition \ref{pro4.1} holds for any finite time, in other words, one can replace $t^*$ by any positive time $T\in[0,\infty)$.
\end{remark}

Thanks to the Proposition \ref{pro4.1} and Remark 4.1, we are ready to establish the proof of Theorem \ref{thm1}.

\begin{proof}[Proof of Theorem \ref{thm1}.]

(i) It suffices to prove
\begin{equation}
\begin{aligned}
\sup_{0\leq t\leq t^*}\big(\|V_\varepsilon\|_2^2+\varepsilon^2\|W_\varepsilon\|_2^2\big)(t)
+\int_0^{t^*}\Big(\|\nabla_HV_\varepsilon\|_2^2+\varepsilon^2\|\nabla_HW_\varepsilon\|_2^2&\\
+\varepsilon^{\alpha-2}\|\partial_zV_\varepsilon\|_2^2+\varepsilon^\alpha\|\partial_zW_\varepsilon\|_2^2\Big)dt
\leq C(\|v_0\|_{H^1},L_1,L_2,t^*)\varepsilon^\beta&,\label{*}
\end{aligned}
\end{equation}
where $\beta=\min\{\alpha-2,2\}$.

As $v$ is the unique local strong solution of \eqref{6}, then \eqref{6} holds in $L^2(\Omega\times(0,t*))$ and consequently  one can multiply \eqref{6} by $v_\varepsilon$, and integrating over $\Omega\times (0,t_0)$. By integrating by parts, it follows
\begin{equation}
\int_0^{t_0}\int_\Omega\Big(\partial_tv\cdot v_\varepsilon+\nabla_Hv:\nabla_Hv_\varepsilon\Big)d\Omega dt=-\int_0^{t_0}\int_\Omega(u\cdot\nabla)v\cdot v_\varepsilon~d\Omega dt,\label{15}
\end{equation}
for any $t_0\in[0,t^*]$.
 Multiplying \eqref{6} by $v$ and integrating over $\Omega\times (0,t_0)$, it follows from integrating by parts that
\begin{equation}
\frac{1}{2}\|v(t_0)\|_2^2+\int_0^{t_0}\|\nabla_Hv\|_2^2dt=\frac{1}{2}\|v_0\|_2^2,\label{16}
\end{equation}
for any $t_0\in[0,t^*]$.
 The energy inequality in Definition 1.1 gives
\begin{equation}
\begin{aligned}
&\frac{1}{2}\big(\|v_\varepsilon(t_0)\|_2^2+\varepsilon^2\|w_\varepsilon(t_0)\|_2^2\big)\\
&\qquad+\int_0^{t_0}\Big(\|\nabla_Hv_\varepsilon\|_2^2+\varepsilon^{\alpha-2}\|\partial_zv_\varepsilon\|_2^2
+\varepsilon^2\|\nabla_Hw_\varepsilon\|_2^2+\varepsilon^\alpha\|\partial_zw_\varepsilon\|_2^2\Big)dt\\
\leq&\frac{1}{2}\big( \|v_0\|_2^2+\varepsilon^2\|w_0\|_2^2\big),\label{17}
\end{aligned}
\end{equation}
for a.e. $t_0\in[0,t^*]$, in particular for $t_0=0$. Summing \eqref{16} and \eqref{17}, and then subtracting \eqref{15} as well as the integral equality in Proposition \ref{pro4.1},
we obtain
\begin{equation*}
\begin{aligned}
&\frac{1}{2}\big(\|V_\varepsilon(t_0)\|_2^2+\varepsilon^2\|W_\varepsilon(t_0)\|_2^2\big)\\
&\qquad+\int_0^{t_0}\Big(\|\nabla_HV_\varepsilon\|_2^2+\varepsilon^2\|\nabla_HW_\varepsilon\|_2^2+\varepsilon^{\alpha-2}\|\partial_zV_\varepsilon\|_2^2+\varepsilon^\alpha\|\partial_zW_\varepsilon\|_2^2\Big)dt\\
\leq&-\int_0^{t_0}\int_\Omega\Big(\varepsilon^2\nabla_Hw\cdot\nabla_HW_\varepsilon+\varepsilon^{\alpha-2}\partial_zv\cdot\partial_zV_\varepsilon+\varepsilon^\alpha\partial_zw\partial_zW_\varepsilon\Big)d\Omega dt\\
&-\varepsilon^2\int_0^{t_0}\int_\Omega\nabla_HW_\varepsilon\cdot\left(\int_{-1}^z\partial_tv d\xi\right) d\Omega dt+\int_0^{t_0}\int_\Omega\varepsilon^2u_\varepsilon\cdot\nabla W_\varepsilon w~d\Omega dt\\
&+\int_0^{t_0}\int_\Omega\Big((u\cdot\nabla)v\cdot v_\varepsilon+(u_\varepsilon\cdot\nabla)v_\varepsilon\cdot v\Big)d\Omega dt:=I_1+I_2+I_3+I_4,
\end{aligned}
\end{equation*}
for a.e. $t_0\in[0,t^*]$.

$I_1$ and $I_2$ can be estimated directly by the H$\ddot{o}$lder and Young inequalities as
\begin{equation*}
\begin{aligned}
I_1&=-\int_0^{t_0}\int_\Omega\Big(\varepsilon^2\nabla_Hw\cdot\nabla_HW_\varepsilon+\varepsilon^{\alpha-2}\partial_zv\cdot\partial_zV_\varepsilon+\varepsilon^\alpha\partial_zw\partial_zW_\varepsilon\Big)d\Omega dt\\
&\leq \varepsilon^2\|\nabla_Hw\|_{L^2(Q_{t_0})}\|\nabla_HW_\varepsilon\|_{L^2(Q_{t_0})}+\varepsilon^{\alpha-2}\|\partial_zv\|_{L^2(Q_{t_0})}\|\partial_zV_\varepsilon\|_{L^2(Q_{t_0})}\\
&\qquad+\varepsilon^\alpha\|\partial_zw\|_{L^2(Q_{t_0})}\|\partial_zW_\varepsilon\|_{L^2(Q_{t_0})}\\
&\leq \frac{1}{8}\big(\varepsilon^2\|\nabla_HW_\varepsilon\|_{L^2(Q_{t_0})}^2+\varepsilon^{\alpha-2}\|\partial_zV_\varepsilon\|_{L^2(Q_{t_0})}^2+\varepsilon^\alpha\|\partial_zW_\varepsilon\|_{L^2(Q_{t_0})}^2\big)\\
&\qquad +C\varepsilon^\beta\big(\|\nabla_Hw\|_{L^2(Q_{t_0})}^2+\|\partial_zv\|_{L^2(Q_{t_0})}^2+\|\partial_zw\|_{L^2(Q_{t_0})}^2\big),
\end{aligned}
\end{equation*}
and
\begin{equation*}
\begin{aligned}
I_2&=-\varepsilon^2\int_0^{t_0}\int_\Omega\nabla_HW_\varepsilon\cdot\Big(\int_{-1}^z\partial_tv d\xi\Big) d\Omega dt\\
&\leq \frac{\varepsilon^2}{8}\|\nabla_HW_\varepsilon\|_{L^2(Q_{t_0})}^2+C\varepsilon^\beta\|\partial_tv\|_{L^2(Q_{t_0})}^2,
\end{aligned}
\end{equation*}
where $Q_{t_0}=\Omega\times(0,t_0)$.

By the incompressibility condition \eqref{10}, one obtains
\begin{equation*}
\begin{aligned}
I_3&=\varepsilon^2\int_0^{t_0}\int_\Omega u_\varepsilon\cdot\nabla W_\varepsilon w~d\Omega dt\\
&=\varepsilon^2\int_0^{t_0}\int_\Omega\Big( v_\varepsilon\cdot\nabla_HW_\varepsilon w-w_\varepsilon\big(\nabla_H\cdot V_\varepsilon\big)w\Big)d\Omega dt =:I_{31}+I_{32}.
\end{aligned}
\end{equation*}
For $I_{31}$ and $I_{32}$, by Lemma \ref{lem1} and using the Young inequality, one deduces
\begin{equation*}
\begin{aligned}
I_{31}\leq&\varepsilon^2\int_0^{t_0}\int_\Omega |v_\varepsilon||\nabla_HW_\varepsilon|\left(\int_{-1}^z|\nabla_H\cdot v|d\xi\right) d\Omega dt\\
\leq&\varepsilon^2\int_0^{t_0}\int_M\left(\int_{-1}^1 |v_\varepsilon||\nabla_HW_\varepsilon|dz\right)\left(\int_{-1}^1|\nabla_H v|dz\right) dM dt\\
\leq & C\varepsilon^2\int_0^{t_0}\|v_\varepsilon\|_2^{\frac{1}{2}}\big(\|v_\varepsilon\|_2+\|\nabla_Hv_\varepsilon\|_2\big)^{\frac{1}{2}}\|\nabla_HW_\varepsilon\|_2\|\nabla_Hv\|_2^{\frac{1}{2}}\|\Delta_Hv\|_2^{\frac{1}{2}}dt\\
\leq& C\varepsilon^2\int_0^{t_0}\Big[\|v_\varepsilon\|_2^2\big(\|v_\varepsilon\|_2^2+\|\nabla_Hv_\varepsilon\|_2^2\big)
+\|\nabla_Hv\|_2^{2}\|\Delta_Hv\|_2^{2}\Big]dt\\
&+\frac{\varepsilon^2}{8}\|\nabla_HW_\varepsilon\|_{L^2(Q_{t_0})}^2
\end{aligned}
\end{equation*}
and
\begin{equation*}
\begin{aligned}
I_{32}\leq&\varepsilon^2\int_0^{t_0}\int_\Omega |w_\varepsilon||\nabla_H V_\varepsilon|\left(\int_{-1}^z|\nabla_Hv|d\xi\right) d\Omega dt\\
\leq&\varepsilon^2\int_0^{t_0}\int_M\Big(\int_{-1}^1|w_\varepsilon||\nabla_H V_\varepsilon|dz\Big)\left(\int_{-1}^1|\nabla_Hv|dz\right) dM dt\\
\leq & C\varepsilon^2\int_0^{t_0}\|w_\varepsilon\|_2^{\frac{1}{2}}\|\nabla_Hw_\varepsilon\|_2^{\frac{1}{2}}\|\nabla_HV_\varepsilon\|_2\|\nabla_Hv\|_2^{\frac{1}{2}}\|\Delta_Hv\|_2^{\frac{1}{2}}dt\\
\leq&C\varepsilon^2\int_0^{t_0}\big(\varepsilon^4\|w_\varepsilon\|_2^2\|\nabla_Hw_\varepsilon\|_2^2+\|\nabla_Hv\|_2^2\|\Delta_Hv\|_2^2\big)dt\\
&+\frac{1}{8}\|\nabla_HV_\varepsilon\|_{L^2(Q_{t_0})}^2.
\end{aligned}
\end{equation*}
Therefore, combining the estimates of $I_{31}$ and $I_{32}$, one gets
\begin{equation*}
\begin{aligned}
I_3\leq \frac{\varepsilon^2}{8}&\|\nabla_HW_\varepsilon\|_{L^2(Q_{t_0})}^2+\frac{1}{8}\|\nabla_HV_\varepsilon\|_{L^2(Q_{t_0})}^2+C\varepsilon^2,
\end{aligned}
\end{equation*}
where we have used the result of Proposition \ref{pro1} and the energy inequality for $(v_\varepsilon,w_\varepsilon)$ in Definition 1.1.

Finally, for $I_4$, by the incompressibility condition and integrating by parts,  it follows
\begin{equation*}
\begin{aligned}
I_4&=\int_0^{t_0}\int_\Omega\Big(-(u\cdot\nabla)v_\varepsilon \cdot v+(u_\varepsilon\cdot\nabla)v_\varepsilon\cdot v\Big)d\Omega dt\\
&=\int_0^{t_0}\int_\Omega(U_\varepsilon\cdot\nabla)v_\varepsilon\cdot v~d\Omega dt
 =\int_0^{t_0}\int_\Omega(U_\varepsilon\cdot\nabla)V_\varepsilon\cdot v~d\Omega dt\\
&=\int_0^{t_0}\int_\Omega(V_\varepsilon\cdot\nabla_H)V_\varepsilon\cdot v~d\Omega dt+\int_0^{t_0}\int_\Omega W_\varepsilon\partial_zV_\varepsilon\cdot v~d\Omega dt =:I_{41}+I_{42}.
\end{aligned}
\end{equation*}
Using $|v|\leq \int_{-1}^1|\partial_zv|dz+\frac{1}{2}\int_{-1}^1|v|dz$, it follows from Lemma \ref{lem1} that
\begin{equation*}
\begin{aligned}
I_{41}\leq& \int_0^{t_0}\int_M\left(\int_{-1}^1|V_\varepsilon||\nabla_HV_\varepsilon|dz\right)\left(\int_{-1}^1\big(|\partial_zv|
+|v|\big)dz\right)dMdt\\
\leq & C\int_0^{t_0}\|\nabla_HV_\varepsilon\|_2\|V_\varepsilon\|_2^{\frac{1}{2}}(\|V_\varepsilon\|_2^{\frac{1}{2}}+\|\nabla_HV_\varepsilon\|_2^{\frac{1}{2}})\\
& \times\Big[\|\partial_zv\|_2^{\frac{1}{2}}(\|\partial_zv\|_2^{\frac{1}{2}}+\|\nabla_H\partial_zv\|_2^{\frac{1}{2}})+\|v\|_2^{\frac{1}{2}}(\|v\|_2^{\frac{1}{2}}+\|\nabla_Hv\|_2^{\frac{1}{2}})\Big]dt\\
\leq& \frac{1}{16}\|\nabla_HV_\varepsilon\|_{L^2(Q_{t_0})}^2
+C\int_0^{t_0}\|V_\varepsilon\|_2^2\Big[\|\partial_zv\|_2^{2}(\|\partial_zv\|_2^{2}+\|\nabla_H\partial_zv\|_2^{2})\\
& +\|v\|_2^{2}(\|v\|_2^{2}+\|\nabla_Hv\|_2^{2})+1\Big]dt\\
\leq&\frac{1}{16}\|\nabla_HV_\varepsilon\|_{L^2(Q_{t_0})}^2+C\int_0^{t_0}\|V_\varepsilon\|_2^2(1+\|\nabla_H\partial_zv\|_2^{2})dt,
\end{aligned}
\end{equation*}
where Proposition \ref{pro1} has been used. For $I_{42}$, it can be estimated in the same way as follows
\begin{equation*}
\begin{aligned}
I_{42}&=\int_0^{t_0}\int_\Omega\Big(\nabla_H\cdot V_\varepsilon V_\varepsilon\cdot v-W_\varepsilon V_\varepsilon\cdot\partial_zv\Big)d\Omega dt\\
&\leq \int_0^{t_0}\int_M\left(\int_{-1}^1|\nabla_HV_\varepsilon||V_\varepsilon|dz\right)\left(
\int_{-1}^1\big(|\partial_zv|+\frac{1}{2}|v|\big)dz\right)dMdt\\
&\qquad+\int_0^{t_0}\int_M\left(\int_{-1}^1|\nabla_HV_\varepsilon|dz\right)\left(
\int_{-1}^1|V_\varepsilon||\partial_zv|dz\right)dMdt\\
&\leq \frac{1}{16}\|\nabla_HV_\varepsilon\|_{L^2(Q_{t_0})}^2+C\int_0^{t_0}\|V_\varepsilon\|_2^2(1+\|\nabla_H\partial_zv\|_2^{2})dt.
\end{aligned}
\end{equation*}
Therefore, we have
\begin{equation*}
I_4\leq \frac{1}{8}\|\nabla_HV_\varepsilon\|_{L^2_{t_0}L^2}^2+C\int_0^{t_0}\|V_\varepsilon\|_2^2(1+\|\nabla_H\partial_zv\|_2^{2})dt.
\end{equation*}

Combining the above estimates of $I_1$, $I_2$, $I_3$, and $I_4$, by Proposition \ref{pro1}, one obtains
\begin{equation*}
\begin{aligned}
&f(t):=\|V_\varepsilon(t)\|_2^2+\varepsilon^2\|W_\varepsilon(t)\|_2^2\\
&\qquad+\int_0^{t}\Big(\|\nabla_HV_\varepsilon\|_2^2+\varepsilon^2\|\nabla_HW_\varepsilon\|_2^2+\varepsilon^{\alpha-2}\|\partial_zV_\varepsilon\|_2^2+\varepsilon^\alpha\|\partial_zW_\varepsilon\|_2^2\Big)ds\\
\leq& C\varepsilon^\beta+C\int_0^{t}\|V_\varepsilon\|_2^2(1+\|\nabla_H\partial_zv\|_2^2)ds=:F(t),
\end{aligned}
\end{equation*}
for a.e. $t\in[0,t^*]$. Therefore,
\begin{equation*}
\begin{aligned}
F'(t)&=C(1+\|\nabla_H\partial_zv\|_2^2)\|V_\varepsilon\|_2^2
\leq C(1+\|\nabla_H\partial_zv\|_2^2)f(t)\\
&\leq C(1+\|\nabla_H\partial_zv\|_2^2)F(t).
\end{aligned}
\end{equation*}
Then, by the Gronwall inequality and Proposition \ref{pro1}, we have
\begin{equation*}
\begin{aligned}
f(t)\leq F(t)\leq e^{C\int_0^{t^*}(1+\|\nabla_H\partial_zv\|_2^2)dt}F(0)
\leq C\varepsilon^\beta,
\end{aligned}
\end{equation*}
for a.e. $t\in[0,t^*]$, where $C$ depends only on $t^*$, $\|v_0\|_{H^1}$, $L_1$ and $L_2$. This proves (\ref{*}) and, thus, (i) holds.

(ii) Similar to (i), it suffices to show that

\begin{equation}
\begin{aligned}
&\sup_{0\leq t\leq T}\big(\|V_\varepsilon\|_2^2+\varepsilon^2\|W_\varepsilon\|_2^2\big)(t)\\
&\qquad+\int_0^{T}\Big(\|\nabla_HV_\varepsilon\|_2^2+\varepsilon^2\|\nabla_HW_\varepsilon\|_2^2+\varepsilon^{\alpha-2}\|\partial_zV_\varepsilon\|_2^2+\varepsilon^\alpha\|\partial_zW_\varepsilon\|_2^2\Big)dt\\
&\leq K(T)\varepsilon^\beta,\label{**}
\end{aligned}
\end{equation}
where $K(T)>0$ is a continuously increasing function determined by $\|v_0\|_{H^1}$, $\|\partial_zv_0\|_m$, $L_1$, $L_2$, and $t^*$. This can be proved exactly in the same way as (i), as in this case the a priori estimates used for proving (i) are valid up to any finite time $T$.
\end{proof}

\section{Proof of Theorem \ref{thm2}}

Suppose $v_0\in H^2(\Omega)$ with $\nabla_H\cdot\int_{-1}^1v_0 dz=0$. Then, by \eqref{9}, it has $u_0=(v_0,w_0)\in H^1(\Omega)$ and $\nabla\cdot u_0=0$. By the classical theory of Navier-Stokes equations (see \cite{Con} and \cite{Tem}), there is a unique local strong solution $(v_\varepsilon,w_\varepsilon)$ to \eqref{2}, subject to \eqref{3}--\eqref{5}. Denote by $T_\varepsilon^*$ the maximal existence time of $(v_\varepsilon,w_\varepsilon)$. Let $v$ be the global strong solution to \eqref{6} established in Proposition \ref{pro4}.

Here we still denote $U_\varepsilon=(V_\varepsilon,W_\varepsilon)$, and $V_\varepsilon=v_\varepsilon-v$, $W_\varepsilon=w_\varepsilon-w$. Since both $v$ and $(v_\varepsilon,w_\varepsilon)$ are strong solutions to \eqref{6} and \eqref{2}, respectively, one can check that $(V_\varepsilon,W_\varepsilon)$ satisfies
\begin{equation}
\begin{aligned}
\partial_tV_\varepsilon-\Delta_HV_\varepsilon-\varepsilon^{\alpha-2}\partial_z^2V_\varepsilon&+(U_\varepsilon\cdot\nabla)V_\varepsilon+\nabla_HP_\varepsilon\\
&+(U_\varepsilon\cdot\nabla)v+(u\cdot\nabla)V_\varepsilon=\varepsilon^{\alpha-2}\partial_z^2v,\label{18}
\end{aligned}
\end{equation}
\begin{equation}
\nabla_H\cdot V_\varepsilon+\partial_zW_\varepsilon=0,\label{19}
\end{equation}
\begin{equation}
\begin{aligned}
\varepsilon^2(\partial_tW_\varepsilon-\Delta_HW_\varepsilon&-\varepsilon^{\alpha-2}\partial_z^2W_\varepsilon+U_\varepsilon\cdot\nabla W_\varepsilon+U_\varepsilon\cdot\nabla w+u\cdot\nabla W_\varepsilon)\\
&+\partial_zP_\varepsilon=-\varepsilon^2(\partial_tw-\Delta_Hw-\varepsilon^{\alpha-2}\partial_z^2w+u\cdot\nabla w),\label{20}
\end{aligned}
\end{equation}
in $L^2(0,T_\varepsilon^*;L^2(\Omega))$, where $P_\varepsilon=p_\varepsilon-p$.

Since $v_0\in H^2(\Omega)$, it is clear that (\ref{**}) still holds  when $t\in[0,T_\varepsilon^*)$. In other words, the following holds:

\begin{equation}
\begin{aligned}
&\sup_{0\leq s\leq t}\big(\|V_\varepsilon\|_2^2+\varepsilon^2\|W_\varepsilon\|_2^2\big)(s)\\
&\qquad+\int_0^{t}\Big(\|\nabla_HV_\varepsilon\|_2^2+\varepsilon^2\|\nabla_HW_\varepsilon\|_2^2+\varepsilon^{\alpha-2}\|\partial_zV_\varepsilon\|_2^2+\varepsilon^\alpha\|\partial_zW_\varepsilon\|_2^2\Big)ds\\
&\leq K_1(t)\varepsilon^\beta,\label{***}
\end{aligned}
\end{equation}
for $t\in[0,T_\varepsilon^*)$, where $K_1(t):[0,\infty)\mapsto \mathbb{R}^+$ is a continuously increasing function determined by $\|v_0\|_{H^2}$, $L_1$ and $L_2$.

Besides the basic energy estimate stated in the above, we also have the first order energy estimate of $(V_\varepsilon,W_\varepsilon)$ in the following proposition.
\begin{proposition}
\label{pro5.1}
There exists a small constant $\sigma>0$ depending only on $L_1$ and $L_2$, such that the following inequality holds
\begin{equation*}
\begin{aligned}
&\sup_{0\leq s\leq t}\big(\|\nabla V_\varepsilon\|_2^2+\varepsilon^2\|\nabla W_\varepsilon\|_2^2\big)(s)\\
&\qquad+\int_0^{t}\Big(\|\nabla\nabla_HV_\varepsilon\|_2^2+\varepsilon^2\|\nabla\nabla_HW_\varepsilon\|_2^2+\varepsilon^{\alpha-2}\|\nabla\partial_zV_\varepsilon\|_2^2+\varepsilon^\alpha\|\nabla\partial_zW_\varepsilon\|_2^2\Big)ds\\
&\leq K_2(t)\varepsilon^\beta,
\end{aligned}
\end{equation*}
for any $t\in[0,T_\varepsilon^*)$, as long as
\begin{equation*}
\sup_{0\leq s\leq t}\big(\|\nabla V_\varepsilon\|_2^2+\varepsilon^2\|\nabla W_\varepsilon\|_2^2\big)(s)\leq\sigma^2,
\end{equation*}
where $K_2(t):[0,\infty)\mapsto\mathbb{R}^+$ is a continuously increasing function determined by $\|v_0\|_{H^2}$, $L_1$ and $L_2$.
\end{proposition}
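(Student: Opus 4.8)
The plan is to obtain the first-order bound by a gradient-level energy estimate on the difference system \eqref{18}--\eqref{20}, bootstrapping on the zeroth-order estimate \eqref{***} and exploiting the smallness hypothesis $\sup_{0\le s\le t}(\|\nabla V_\varepsilon\|_2^2+\varepsilon^2\|\nabla W_\varepsilon\|_2^2)\le\sigma^2$ to close the argument. Concretely, for each $k\in\{x,y,z\}$ I would apply $\partial_k$ to \eqref{18} and to \eqref{20}, take the $L^2(\Omega)$ inner product of the resulting $V_\varepsilon$-equation with $\partial_k V_\varepsilon$ and of the resulting $W_\varepsilon$-equation with $\partial_k W_\varepsilon$, then sum over $k$ and add the two families. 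Since all functions are periodic in the spatial variables, integration by parts produces no boundary terms; the time-derivative terms assemble into $\tfrac12\frac{d}{dt}\big(\|\nabla V_\varepsilon\|_2^2+\varepsilon^2\|\nabla W_\varepsilon\|_2^2\big)$, while the viscous terms yield exactly $\|\nabla\nabla_H V_\varepsilon\|_2^2+\varepsilon^{\alpha-2}\|\nabla\partial_z V_\varepsilon\|_2^2+\varepsilon^2\|\nabla\nabla_H W_\varepsilon\|_2^2+\varepsilon^\alpha\|\nabla\partial_z W_\varepsilon\|_2^2$, matching the left-hand side of the claimed inequality.

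The first structural point is that the pressure drops out. Differentiating \eqref{19} gives $\nabla_H\cdot\partial_k V_\varepsilon=-\partial_z\partial_k W_\varepsilon$, so after integrating by parts the pressure contribution from $\nabla_H P_\varepsilon$ in \eqref{18} equals $\int_\Omega\partial_k P_\varepsilon\,\partial_z\partial_k W_\varepsilon\,d\Omega$; on the other hand, the term $\partial_z P_\varepsilon$ in \eqref{20}, once differentiated by $\partial_k$, tested against $\partial_k W_\varepsilon$ and integrated by parts in $z$, contributes $-\int_\Omega\partial_k P_\varepsilon\,\partial_z\partial_k W_\varepsilon\,d\Omega$. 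These cancel for every $k$, so no estimate on $P_\varepsilon$ is needed; this is the one place where the coupling between the constraint \eqref{19} and the vertical equation \eqref{20} is genuinely used.

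It then remains to organize the forcing and the nonlinear terms into three types: a genuine $O(\varepsilon^\beta)$ source, a term absorbable into the dissipation, or a Gronwall term $\Phi(t)g(t)$ with $g:=\|\nabla V_\varepsilon\|_2^2+\varepsilon^2\|\nabla W_\varepsilon\|_2^2$ and $\Phi\in L^1_{loc}$. The forcing $\varepsilon^{\alpha-2}\partial_z^2 v$ of \eqref{18}, after integrating by parts in $z$ (to avoid the uncontrolled $\partial_z^3 v$) and using $\varepsilon^{\alpha-2}\le\varepsilon^\beta$ together with $v\in L^\infty(0,T;H^2)$ and $\nabla_H v\in L^2(0,T;H^2)$ from Proposition \ref{pro4}, yields an $O(\varepsilon^\beta)$ term plus a fraction of $\varepsilon^{\alpha-2}\|\nabla\partial_z V_\varepsilon\|_2^2$; the $\varepsilon^2$-forcing on the right of \eqref{20} is treated likewise, its prefactor $\varepsilon^2$ making it harmless. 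In the convective part the top-order transports $(U_\varepsilon\cdot\nabla)\nabla V_\varepsilon$ and $(u\cdot\nabla)\nabla V_\varepsilon$ integrate to zero against $\nabla V_\varepsilon$ by $\nabla\cdot U_\varepsilon=\nabla\cdot u=0$, leaving commutators of the schematic forms $\int|\nabla U_\varepsilon||\nabla V_\varepsilon|^2$, $\int|\nabla u||\nabla V_\varepsilon|^2$, $\int|\nabla U_\varepsilon||\nabla v||\nabla V_\varepsilon|$ and $\int|U_\varepsilon||\nabla^2 v||\nabla V_\varepsilon|$ (and their $W_\varepsilon$-analogues carrying $\varepsilon^2$). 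I would bound all of these with the anisotropic trilinear inequalities of Lemma \ref{lem1}, crucially substituting $W_\varepsilon=-\int_{-1}^z\nabla_H\cdot V_\varepsilon\,d\xi$ from \eqref{19} so that the vertical velocity is always controlled by $\|\nabla_H V_\varepsilon\|_2$ rather than by its weak $L^2$ bound: the cubic commutators are absorbed into the dissipation using $\|\nabla V_\varepsilon\|_2\le\sigma$, the terms carrying an undifferentiated difference factor give the $O(\varepsilon^\beta)$ contribution through \eqref{***}, and the remaining quadratic-in-$\nabla V_\varepsilon$ pieces become Gronwall factors governed by the $H^2$-norms of $v$.

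The hard part will be the anisotropic bookkeeping in the cubic and mixed terms involving the vertical velocity $W_\varepsilon$ and the vertical derivative $\partial_z V_\varepsilon$, where only the weighted quantity $\varepsilon\|\nabla W_\varepsilon\|_2$ is a priori small: each such trilinear integral must be split so that at most one factor carries a top-order horizontal derivative $\nabla\nabla_H$, the remaining regularity is distributed through Lemma \ref{lem1} and the integral representation of $W_\varepsilon$, and one must verify that every coefficient multiplying the dissipation is bounded by $C\sigma$, so that a small enough choice of $\sigma$ (depending only on $L_1,L_2$) closes the absorption while the leftover lower-order factors remain integrable in time against the bounds of Proposition \ref{pro4}. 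Collecting all terms then yields a differential inequality of the form $\frac{d}{dt}g(t)+\tfrac12(\text{dissipation})\le C\varepsilon^\beta+C(1+\Phi(t))g(t)$; since $V_\varepsilon$ and $W_\varepsilon$ vanish at $t=0$ because the data coincide, Gronwall's inequality gives $g(t)+\int_0^t(\text{dissipation})\,ds\le K_2(t)\varepsilon^\beta$ with $K_2$ continuously increasing and depending only on $\|v_0\|_{H^2}$, $L_1$ and $L_2$, which is the assertion of the proposition.
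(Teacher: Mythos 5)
Your proposal follows essentially the same route as the paper's proof: the paper tests \eqref{18} and \eqref{20} with $-\Delta V_\varepsilon$ and $-\Delta W_\varepsilon$ (equivalent, in the periodic setting after integration by parts, to your $\partial_k$-differentiated energy estimates summed over $k$), cancels the pressure via the constraint \eqref{19}, bounds the nonlinear terms with Lemma \ref{lem1} together with the substitution $W_\varepsilon=-\int_{-1}^z\nabla_H\cdot V_\varepsilon\,d\xi$, absorbs the cubic terms using the smallness hypothesis with $\sigma^2=1/(16C_1)$, treats $\varepsilon^{\alpha-2}\partial_z^2v$ as the $O(\varepsilon^\beta)$ source, and closes by Gronwall using $(V_\varepsilon,W_\varepsilon)|_{t=0}=0$, the basic estimate \eqref{***}, and Proposition \ref{pro4}. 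Your plan reproduces all of these essential devices, so it is correct and takes essentially the same approach as the paper.
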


\begin{proof}
Since \eqref{18} holds in $L^2((0,T_\varepsilon^*)\times \Omega)$ and $-\Delta V_{\varepsilon}\in L^2((0,T_\varepsilon^*)\times \Omega)$, one can multiply \eqref{18} with $-\Delta V_\varepsilon$, integrating over $\Omega$, and by integration by parts, to get
\begin{equation*}
\begin{aligned}
&\frac{1}{2}\frac{d}{dt}\|\nabla V_\varepsilon\|_2^2+\|\nabla\nabla_HV_\varepsilon\|_2^2+\varepsilon^{\alpha-2}\|\nabla\partial_zV_\varepsilon\|_2^2+\int_\Omega \nabla_HP_\varepsilon\cdot\Delta V_\varepsilon d\Omega\\
=&\int_\Omega\Big[(U_\varepsilon\cdot\nabla)V_\varepsilon+(U_\varepsilon\cdot\nabla)v+(u\cdot\nabla)V_\varepsilon\Big]\cdot\Delta V_\varepsilon~d\Omega-\int_\Omega\varepsilon^{\alpha-2}\partial_z^2v\cdot\Delta V_\varepsilon~d\Omega.
\end{aligned}
\end{equation*}
We estimate the terms on the right hand side of the above equality as follows. By Lemma \ref{lem1} and using $|f(x,y,z)|\leq\frac{1}{2}\int_{-1}^1|f|dz+\int_{-1}^1|\partial_zf|dz$, one deduces
\begin{align*}
&\int_\Omega(U_\varepsilon\cdot\nabla)V_\varepsilon\cdot\Delta V_\varepsilon~d\Omega\\
=&\int_\Omega\Big((V_\varepsilon\cdot\nabla_H)V_\varepsilon\cdot\Delta_H V_\varepsilon+W_\varepsilon\partial_zV_\varepsilon\cdot\Delta_H V_\varepsilon\Big)d\Omega\\
&+\int_\Omega\Big((V_\varepsilon\cdot\nabla_H)V_\varepsilon\cdot\partial_z^2V_\varepsilon+W_\varepsilon\partial_zV_\varepsilon\cdot\partial_z^2V_\varepsilon\Big)d\Omega\\
=&\int_\Omega\Big((V_\varepsilon\cdot\nabla_H)V_\varepsilon\cdot\Delta_H V_\varepsilon+W_\varepsilon\partial_zV_\varepsilon\cdot\Delta_H V_\varepsilon\Big)d\Omega\\
&-\int_\Omega\Big((\partial_zV_\varepsilon\cdot\nabla_H)V_\varepsilon\cdot\partial_zV_\varepsilon-\nabla_H\cdot V_\varepsilon|\partial_zV_\varepsilon|^2\Big)d\Omega\\
\leq&\int_M\left(\int_{-1}^1\big(|\partial_zV_\varepsilon|+|V_\varepsilon|\big)dz\right)\left(
\int_{-1}^1|\nabla_HV_\varepsilon||\Delta_HV_\varepsilon|dz\right)dM\\
&+\int_M\left(\int_{-1}^1|\nabla_HV_\varepsilon|dz\right)\left(\int_{-1}^1|\partial_zV_\varepsilon||\Delta_HV_\varepsilon|dz\right)dM\\
&+2\int_M\left(\int_{-1}^1\big(|\nabla_H\partial_zV_\varepsilon|+\frac{1}{2}|\nabla_HV_\varepsilon|\big)dz\right)\left(
\int_{-1}^1|\partial_zV_\varepsilon|^2dz\right)dM\\
\leq&C\|\Delta_HV_\varepsilon\|_2\|\nabla_HV_\varepsilon\|_2^{\frac{1}{2}}\big(\|\nabla_HV_\varepsilon\|_2^{\frac{1}{2}}+\|\Delta_HV_\varepsilon\|_2^{\frac{1}{2}}\big)\\
&\qquad\times \Big[\|\partial_zV_\varepsilon\|_2^{\frac{1}{2}}\big(\|\partial_zV_\varepsilon\|_2^{\frac{1}{2}}+\|\nabla_H\partial_zV_\varepsilon\|_2^{\frac{1}{2}}\big)+\|V_\varepsilon\|_2^{\frac{1}{2}}\big(\|V_\varepsilon\|_2^{\frac{1}{2}}+\|\nabla_HV_\varepsilon\|_2^{\frac{1}{2}}\big)\Big]\\
&+C\big(\|\nabla_H\partial_zV_\varepsilon\|_2+\|\nabla_HV_\varepsilon\|_2\big)\|\partial_zV_\varepsilon\|_2\big(\|\partial_zV_\varepsilon\|_2+\|\nabla_H\partial_zV_\varepsilon\|_2\big)\\
\leq&\frac{1}{16}\|\nabla\nabla_HV_\varepsilon\|_2^2+C\big(\|\nabla_HV_\varepsilon\|_2^4+\|\nabla_HV_\varepsilon\|_2^2\|\Delta_HV_\varepsilon\|_2^2\big)+C\|\nabla V_\varepsilon\|_2^3\\
&+C\big(\|\partial_zV_\varepsilon\|_2^4+\|\partial_zV_\varepsilon\|_2^2\|\nabla_H\partial_zV_\varepsilon\|_2^2+\|V_\varepsilon\|_2^4+\|V_\varepsilon\|_2^2\|\nabla_HV_\varepsilon\|_2^2\big)
\end{align*}
Integrating by parts, using $|f(x,y,z)|\leq\frac{1}{2}\int_{-1}^1|f|dz+\int_{-1}^1|\partial_zf|dz$ and applying Lemma \ref{lem1}, one deduces by the Young inequality that
\begin{align*}
&\int_\Omega(U_\varepsilon\cdot\nabla)v\cdot\Delta V_\varepsilon~d\Omega\\
= &\int_\Omega\Big((V_\varepsilon\cdot\nabla_H)v\cdot\Delta_HV_\varepsilon-(\partial_zV_\varepsilon\cdot\nabla_H)v\cdot\partial_zV_\varepsilon-(V_\varepsilon\cdot\nabla_H)\partial_zv\cdot\partial_zV_\varepsilon\Big)d\Omega\\
&+\int_\Omega \Big(W_\varepsilon\partial_zv\cdot\Delta_HV_\varepsilon+\nabla_H\cdot V_\varepsilon\partial_zv\cdot\partial_zV_\varepsilon-W_\varepsilon\partial_z^2v\cdot\partial_zV_\varepsilon\Big)d\Omega\\
\leq& \int_M\left(\int_{-1}^1\big(|V_\varepsilon|+|\partial_zV_\varepsilon|\big)dz\right)\left(
\int_{-1}^1|\nabla_Hv||\Delta_HV_\varepsilon|dz\right)dM\\
&+\int_M\left(\int_{-1}^1|\partial_zV_\varepsilon|^2dz\right)\left(\int_{-1}^1\big(|\nabla_Hv|+|\nabla_H\partial_zv|\big)dz\right)dM\\
&+\int_M\left(\int_{-1}^1\big(|V_\varepsilon|+|\partial_zV_\varepsilon|\big)dz\right)\left(
\int_{-1}^1|\nabla_H\partial_zv||\partial_zV_\varepsilon| dz\right)dM\\
&+\int_M\left(\int_{-1}^1|\nabla_HV_\varepsilon|dz\right)\left(\int_{-1}^1|\partial_zv||\Delta_HV_\varepsilon|dz\right)dM\\
&+\int_M\left(\int_{-1}^1|\nabla_HV_\varepsilon||\partial_zV_\varepsilon|dz\right)\left(\int_{-1}^1|\partial_z^2v|dz\right)dM\\
&+\int_M\left(\int_{-1}^1|\nabla_HV_\varepsilon|dz\right)\left(\int_{-1}^1|\partial_z^2v||\partial_zV_\varepsilon|dz\right)dM\\
\leq& C\|\Delta_HV_\varepsilon\|_2\|\nabla_Hv\|_2^{\frac{1}{2}}\|\Delta_Hv\|_2^{\frac{1}{2}}\\
&\qquad\times \Big[\|\partial_zV_\varepsilon\|_2^{\frac{1}{2}}\big(\|\partial_zV_\varepsilon\|_2^{\frac{1}{2}}+\|\nabla_H\partial_zV_\varepsilon\|_2^{\frac{1}{2}}\big)+\|V_\varepsilon\|_2^{\frac{1}{2}}\big(\|V_\varepsilon\|_2^{\frac{1}{2}}+\|\nabla_HV_\varepsilon\|_2^{\frac{1}{2}}\big)\Big]\\
&+C(\|\nabla_Hv\|_2+\|\nabla_H\partial_zv\|_2)\\
&\qquad\times\Big[\|\partial_zV_\varepsilon\|_2(\|\partial_zV_\varepsilon\|_2+\|\nabla_H\partial_zV_\varepsilon\|_2)+\|V_\varepsilon\|_2(\|V_\varepsilon\|_2+\|\nabla_HV_\varepsilon\|_2)\Big]\\
&+C\|\Delta_HV_\varepsilon\|_2\|\partial_zv\|_2^{\frac{1}{2}}(\|\partial_zv\|_2^{\frac{1}{2}}+\|\nabla_H\partial_zv\|_2^{\frac{1}{2}})\|\nabla_HV_\varepsilon\|_2^{\frac{1}{2}}\|\Delta_HV_\varepsilon\|_2^{\frac{1}{2}}\\
&+C\|\partial_z^2v\|_2\|\nabla_HV_\varepsilon\|_2^{\frac{1}{2}}\|\Delta_HV_\varepsilon\|_2^{\frac{1}{2}}\|\partial_zV_\varepsilon\|_2^{\frac{1}{2}}(\|\partial_zV_\varepsilon\|_2^{\frac{1}{2}}+\|\nabla_H\partial_zV_\varepsilon\|_2^{\frac{1}{2}})\\
\leq& \frac{1}{16}\|\nabla\nabla_HV_\varepsilon\|_2^2+C(1+\|v\|_{H^1}^2)(1+\|v\|_{H^2}^2)\|V_\varepsilon\|_{H^1}^2
\end{align*}
and
\begin{align*}
&\int_\Omega(u\cdot\nabla)V_\varepsilon\cdot\Delta V_\varepsilon~d\Omega\\
=&\int_\Omega\Big( (u\cdot\nabla)V_\varepsilon\Delta_HV_\varepsilon-(\partial_zu\cdot\nabla)V_\varepsilon\cdot\partial_zV_\varepsilon-(u\cdot\nabla)\partial_zV_\varepsilon\cdot\partial_zV_\varepsilon\Big)d\Omega\\
=&\int_\Omega \Big( (v\cdot\nabla_H)V_\varepsilon\cdot\Delta_HV_\varepsilon+w\partial_zV_\varepsilon\cdot\Delta_HV_\varepsilon-(\partial_zv\cdot\nabla_H)V_\varepsilon\cdot\partial_zV_\varepsilon+\nabla_H\cdot v|\partial_zV_\varepsilon|^2\Big)d\Omega\\
\leq&\int_M\left(\int_{-1}^1\big(|v|+|\partial_zv|\big)dz\right)\left(
\int_{-1}^1|\nabla_HV_\varepsilon||\Delta_HV_\varepsilon|dz\right)dM\\
&+\int_M\left(\int_{-1}^1|\nabla_Hv|dz\right)\left(\int_{-1}^1|\partial_zV_\varepsilon||\Delta_HV_\varepsilon|dz\right)dM\\
&+\int_M\left(\int_{-1}^1|\partial_z^2v|dz\right)\left(\int_{-1}^1|\nabla_HV_\varepsilon||\partial_zV_\varepsilon|dz\right)dM\\
&+2\int_M\left(\int_{-1}^1\big(|\nabla_Hv|+|\nabla_H\partial_zv|\big)dz\right)\left(\int_{-1}^1|\partial_zV_\varepsilon|^2dz
\right)dM\\
\leq&C\|\Delta_HV_\varepsilon\|_2\|\nabla_HV_\varepsilon\|_2^{\frac{1}{2}}\|\Delta_HV_\varepsilon\|_2^{\frac{1}{2}}\\
&\qquad\times\Big[\|v\|_2^{\frac{1}{2}}(\|v\|_2^{\frac{1}{2}}+\|\nabla_Hv\|_2^{\frac{1}{2}})+\|\partial_zv\|_2^{\frac{1}{2}}(\|\partial_zv\|_2^{\frac{1}{2}}+\|\nabla_H\partial_zv\|_2^{\frac{1}{2}})\Big]\\
&+C\|\Delta_HV_\varepsilon\|_2\|\nabla_Hv\|_2^{\frac{1}{2}}\|\Delta_Hv\|_2^{\frac{1}{2}}\|\partial_zV_\varepsilon\|_2^{\frac{1}{2}}(\|\partial_zV_\varepsilon\|_2^{\frac{1}{2}}+\|\nabla_H\partial_zV_\varepsilon\|_2^{\frac{1}{2}})\\
&+C\|\partial_z^2v\|_2\|\nabla_HV_\varepsilon\|_2^{\frac{1}{2}}\|\Delta_HV_\varepsilon\|_2^{\frac{1}{2}}\\
&\qquad\times\|\partial_zV_\varepsilon\|_2^{\frac{1}{2}}(\|\partial_zV_\varepsilon\|_2^{\frac{1}{2}}+\|\nabla_H\partial_zV_\varepsilon\|_2^{\frac{1}{2}})\\
&+C(\|\nabla_Hv\|_2+\|\nabla_H\partial_zv\|_2)\|\partial_zV_\varepsilon\|_2(\|\partial_zV_\varepsilon\|_2+\|\nabla_H\partial_zV_\varepsilon\|_2)\\
\leq&\frac{1}{16}\|\nabla\nabla_HV_\varepsilon\|_2^2+C(\|\nabla V_\varepsilon\|_2^2+\|V_\varepsilon\|_2^2)(\|v\|_{H^1}^2+1)(\|v\|_{H^2}^2+1),
\end{align*}
where the Poincar$\acute{e}$ inequality $\|\nabla_Hf\|_2\leq C\|\nabla_H^2f\|_2$ has been used in several places. The Cauchy inequality yields
\begin{align*}
&\int_\Omega\varepsilon^{\alpha-2}\partial_z^2v\cdot\Delta V_\varepsilon~d\Omega\\
\leq&\varepsilon^{\alpha-2}\|\partial_z^2v\|_2(\|\Delta_HV_\varepsilon\|_2+\|\partial_z^2V_\varepsilon\|_2)\\
\leq&\frac{1}{16}\Big(\|\Delta_HV_\varepsilon\|_2^2+\varepsilon^{\alpha-2}\|\partial_z^2V_\varepsilon\|_2^2\Big)+C\big(\varepsilon^{\alpha-2}+\varepsilon^{2(\alpha-2)}\big)\|\partial_z^2v\|_2^2.
\end{align*}
Combining all the above estimates and applying Proposition \ref{pro4}, one deduces
\begin{equation}
\begin{aligned}
&\frac{1}{2}\frac{d}{dt}\|\nabla V_\varepsilon\|_2^2+\frac{3}{4}\Big(\|\nabla\nabla_HV_\varepsilon\|_2^2+\varepsilon^{\alpha-2}\|\nabla\partial_zV_\varepsilon\|_2^2\Big)+\int_\Omega \nabla_HP_\varepsilon\cdot\Delta V_\varepsilon d\Omega\\
\leq&C\varepsilon^{\alpha-2}G(t)+C(G^2(t)+1)\|\nabla V_\varepsilon\|_2^2
+C\|\nabla V_\varepsilon\|_2^2\|\nabla\nabla_HV_\varepsilon\|_2^2+C\|V_\varepsilon\|_{H^1}^4.\label{21}
\end{aligned}
\end{equation}

Recall that \eqref{20} holds in $L^2((0,T_\varepsilon^*)\times \Omega)$ and $-\Delta W_{\varepsilon}\in L^2((0,T_\varepsilon^*)\times \Omega)$. Multiplying \eqref{20} with $-\Delta W_\varepsilon$ and integrating over $\Omega$, one has
\begin{align*}
&\frac{\varepsilon^2}{2}\frac{d}{dt}\|\nabla W_\varepsilon\|_2^2+\varepsilon^2\|\nabla\nabla_HW_\varepsilon\|_2^2+\varepsilon^\alpha\|\partial_z\nabla W_\varepsilon\|_2^2+\int_\Omega \partial_zP_\varepsilon\Delta W_\varepsilon d\Omega\\
=&\varepsilon^2\int_\Omega\Big( U_\varepsilon\cdot\nabla W_\varepsilon\Delta W_\varepsilon+U_\varepsilon\cdot\nabla w\Delta W_\varepsilon+u\cdot\nabla W_\varepsilon\Delta W_\varepsilon\Big)d\Omega+\varepsilon^2\int_\Omega u\cdot\nabla w\Delta W_\varepsilon~d\Omega\\
&+\varepsilon^2\int_\Omega\Big(\partial_t w\Delta W_\varepsilon-\Delta_Hw\Delta W_\varepsilon-\varepsilon^{\alpha-2}\partial_z^2w\Delta W_\varepsilon\Big)d\Omega,
\end{align*}
Using $|f(x,y,z)|\leq \int_{-1}^1\big(|\partial_zf|+\frac{1}{2}|f|\big)dz$, applying Lemma \ref{lem1}, and by the Young inequality, one deduces
\begin{align*}
&\varepsilon^2\int_\Omega U_\varepsilon\cdot\nabla W_\varepsilon\Delta W_\varepsilon~d\Omega\\
=&\varepsilon^2\int_\Omega \Big(V_\varepsilon\cdot\nabla_HW_\varepsilon\Delta_HW_\varepsilon-V_\varepsilon\cdot\nabla_H\partial_zW_\varepsilon\partial_zW_\varepsilon-\partial_zV_\varepsilon\cdot\nabla_HW_\varepsilon\partial_zW_\varepsilon\\
&\qquad\qquad+W_\varepsilon\cdot\partial_zW_\varepsilon\Delta_HW_\varepsilon-\frac{1}{2}\partial_zW_\varepsilon|\partial_zW_\varepsilon|^2\Big)d\Omega\\
=&\varepsilon^2\int_\Omega\Big( V_\varepsilon\cdot\nabla_HW_\varepsilon\Delta_HW_\varepsilon-2V_\varepsilon\cdot\nabla_H\partial_zW_\varepsilon\partial_zW_\varepsilon
\Big)d\Omega\\
&-\varepsilon^2\int_\Omega \partial_zV_\varepsilon\cdot\left(\nabla_H\int_{-1}^z\nabla_H\cdot V_\varepsilon dz'\right)\nabla_H\cdot V_\varepsilon d\Omega\\
&+\varepsilon^2\int_\Omega\left(\int_{-1}^z\nabla_H\cdot V_\varepsilon dz'\right)\nabla_H\cdot V_\varepsilon\Delta_HW_\varepsilon d\Omega\\
\leq&C\varepsilon^2\int_M\left(\int_{-1}^1\big(|V_\varepsilon|+|\partial_zV_\varepsilon|\big)dz\right)
\left(\int_{-1}^1|\nabla W_\varepsilon||\nabla\nabla_HW_\varepsilon|dz\right)dM\\
&+C\varepsilon^2\int_M\left(\int_{-1}^1|\nabla_H^2V_\varepsilon|dz\right)\left(
\int_{-1}^1|\partial_zV_\varepsilon||\nabla_HV_\varepsilon|dz\right)dM\\
&+C\varepsilon^2\int_M\left(\int_{-1}^1|\nabla_HV_\varepsilon|dz\right)\left(
\int_{-1}^1|\nabla_HV_\varepsilon||\Delta_HW_\varepsilon|dz\right)dM\\
\leq&C\varepsilon^2\|\nabla_H\nabla W_\varepsilon\|_2\|\nabla W_\varepsilon\|_2^{\frac{1}{2}}(\|\nabla W_\varepsilon\|_2^{\frac{1}{2}}+\nabla_H\nabla W_\varepsilon\|_2^{\frac{1}{2}})\\
&\qquad\times\Big[\|\partial_zV_\varepsilon\|_2^{\frac{1}{2}}(\|\partial_zV_\varepsilon\|_2^{\frac{1}{2}}+\|\nabla_H\partial_zV_\varepsilon\|_2^{\frac{1}{2}})+\|V_\varepsilon\|_2^{\frac{1}{2}}(\|V_\varepsilon\|_2^{\frac{1}{2}}+\|\nabla_HV_\varepsilon\|_2^{\frac{1}{2}})\Big]\\
&+C\varepsilon^2\|\nabla_H^2V_\varepsilon\|_2\|\partial_zV_\varepsilon\|_2^{\frac{1}{2}}(\|\partial_zV_\varepsilon\|_2+\|\nabla_H\partial_zV_\varepsilon\|_2)^{\frac{1}{2}}\|\nabla_HV_\varepsilon\|_2^{\frac{1}{2}}\|\nabla_H^2V_\varepsilon\|_2^{\frac{1}{2}}\\
&+C\varepsilon^2\|\nabla_HV_\varepsilon\|_2\|\nabla_H^2V_\varepsilon\|_2\|\Delta_H W_\varepsilon\|_2\\
\leq&\frac{\varepsilon^2}{16}\|\nabla\nabla_HW_\varepsilon\|_2^2+\frac{1}{32}\|\nabla_H^2V_\varepsilon\|_2^2+C\|\nabla V_\varepsilon\|_2^2(\|\nabla V_\varepsilon\|_2^2+\|\nabla\nabla_HV_\varepsilon\|_2^2)\\
&+C\varepsilon^2\|\nabla W_\varepsilon\|_2^2(\varepsilon^2\|\nabla W_\varepsilon\|_2^2+\varepsilon^2\|\nabla\nabla_HW_\varepsilon\|_2^2)
+C\|V_\varepsilon\|_2^2(\|V_\varepsilon\|_2^2+\|\nabla_HV_\varepsilon\|_2^2),
\end{align*}
where the incompressibility condition \eqref{19} and the Poincar\'e inequality have been used. Similarly and using further $|W_\varepsilon|=\Big|\int_{-1}^z\partial_zW_\varepsilon(x,y,z')dz'\Big|\leq\int_{-1}^1|\partial_zW_\varepsilon|dz'$ as $W_\varepsilon|_{z=-1}=0$, one deduces
\begin{align*}
&\varepsilon^2\int_\Omega \big(U_\varepsilon\cdot\nabla w\big)\Delta W_\varepsilon~d\Omega\\
=&\varepsilon^2\int_\Omega \Big( \big(V_\varepsilon\cdot\nabla_Hw\big)\Delta_HW_\varepsilon-\big(\partial_zV_\varepsilon\cdot\nabla_Hw\big)\partial_zW_\varepsilon-\big(V_\varepsilon\cdot\nabla_H\partial_zw\big)\partial_zW_\varepsilon\\
&\qquad\qquad+W_\varepsilon\partial_zw\Delta_HW_\varepsilon-|\partial_zW_\varepsilon|^2\partial_zw-W_\varepsilon\partial_z^2w\partial_zW_\varepsilon\Big)d\Omega\\
=&\varepsilon^2\int_\Omega\left[-\left(V_\varepsilon\cdot\nabla_H\int_{-1}^z\nabla_H\cdot vdz'\right)\Delta_HW_\varepsilon+\left(\partial_zV_\varepsilon\cdot\nabla_H\int_{-1}^z\nabla_H\cdot vdz'\right)\partial_zW_\varepsilon\right]d\Omega\\
&+\varepsilon^2\int_\Omega\left[\left(V_\varepsilon\cdot\nabla_H(\nabla_H\cdot v)\right)\partial_zW_\varepsilon+\left(\int_{-1}^z\nabla_H\cdot V_\varepsilon dz'\right)\nabla_H\cdot v\Delta_HW_\varepsilon\right]d\Omega\\
&+\varepsilon^2\int_\Omega\Big(|\partial_zW_\varepsilon|^2\nabla_H\cdot v+W_\varepsilon(\nabla_H\cdot\partial_zv)\partial_zW_\varepsilon\Big)d\Omega\\
\leq&C\varepsilon^2\int_M\left(\int_{-1}^1|\nabla_H^2v|dz\right)\left(\int_{-1}^1|V_\varepsilon||\Delta_HW_\varepsilon|dz\right)dM\\
&+C\varepsilon^2\int_M\left(\int_{-1}^1|\nabla_H^2v|dz\right)\left(
\int_{-1}^1|\partial_zV_\varepsilon||\partial_zW_\varepsilon|dz\right)dM\\
&+C\varepsilon^2\int_M\left(\int_{-1}^1\big(|V_\varepsilon|+|\partial_zV_\varepsilon|\big)dz\right)\left(
\int_{-1}^1|\nabla_H^2v||\partial_zW_\varepsilon|dz\right)dM\\
&+C\varepsilon^2\int_M\left(\int_{-1}^1|\nabla_HV_\varepsilon|dz\right)\left(
\int_{-1}^1|\nabla_Hv||\Delta_HW_\varepsilon|dz\right)dM\\
&+C\varepsilon^2\int_M\left(\int_{-1}^1|\partial_zW_\varepsilon|^2dz\right)\left(
\int_{-1}^1\big(|\nabla_Hv|+|\nabla_H\partial_zv|\big)dz\right)dM\\
&+C\varepsilon^2\int_M\left(\int_{-1}^1|\partial_zW_\varepsilon|dz\right)\left(
\int_{-1}^1|\nabla_H\partial_zv||\partial_zW_\varepsilon|dz\right)dM\\
\leq&C\varepsilon^2\|\Delta_HW_\varepsilon\|_2\|V_\varepsilon\|_2^{\frac{1}{2}}(\|V_\varepsilon\|_2^{\frac{1}{2}}+\|\nabla_HV_\varepsilon\|_2^{\frac{1}{2}})\|\Delta_Hv\|_2^{\frac{1}{2}}\|\nabla_H\Delta_Hv\|_2^{\frac{1}{2}}\\
&+C\varepsilon^2\|\Delta_Hv\|_2\|\partial_zW_\varepsilon\|_2^{\frac{1}{2}}(\|\partial_zW_\varepsilon\|_2^{\frac{1}{2}}+\|\nabla_H\partial_zW_\varepsilon\|_2^{\frac{1}{2}})\\
&\qquad\times\Big[\|\partial_zV_\varepsilon\|_2^{\frac{1}{2}}(\|\partial_zV_\varepsilon\|_2^{\frac{1}{2}}+\|\nabla_H\partial_zV_\varepsilon\|_2^{\frac{1}{2}})+\|V_\varepsilon\|_2^{\frac{1}{2}}(\|V_\varepsilon\|_2^{\frac{1}{2}}+\|\nabla_HV_\varepsilon\|_2^{\frac{1}{2}})\Big]\\
&+C\varepsilon^2\|\Delta_HW_\varepsilon\|_2\|\nabla_Hv\|_2^{\frac{1}{2}}\|\Delta_Hv\|_2^{\frac{1}{2}}\|\nabla_HV_\varepsilon\|_2^{\frac{1}{2}}\|\Delta_HV_\varepsilon\|_2^{\frac{1}{2}}\\
&+C\varepsilon^2(\|\nabla_H\partial_zv\|_2+\|\nabla_Hv\|_2)\|\partial_zW_\varepsilon\|_2(\|\partial_zW_\varepsilon\|_2+\|\nabla_H\partial_zW_\varepsilon\|_2)\\
\leq&\frac{\varepsilon^2}{16}\|\nabla\nabla_HW_\varepsilon\|_2^2+\frac{1}{32}\|\nabla\nabla_HV_\varepsilon\|_2^2+C\|\nabla V_\varepsilon\|_2^2(\|v\|_{H^2}^4+1)\\
&+C(\|V_\varepsilon\|_2^2+\varepsilon^2\|\nabla_HW_\varepsilon\|_2^2)(\|v\|_{H^2}^4+\|v\|_{H^2}^2\|\nabla_Hv\|_{H^2}^2+1)\\
&+C\varepsilon^2\| W_\varepsilon\|_2^2(\|v\|_{H^2}^4+\|v\|_{H^2}^2\|\nabla_Hv\|_{H^2}^2+1).
\end{align*}
The other nonlinear terms can be estimated in the same way, by using Lemma \ref{lem1}, the Poincar$\acute{e}$ inequality and $|f(x,y,z)|\leq\frac{1}{2}\int_{-1}^1|f|dz+\int_{-1}^1|\partial_zf|dz$ as follows. In fact, one deduces
\begin{align*}
&\varepsilon^2\int_\Omega u\cdot\nabla W_\varepsilon\Delta W_\varepsilon~d\Omega\\
=&\varepsilon^2\int_\Omega\Big(v\cdot\nabla_HW_\varepsilon\Delta_HW_\varepsilon-v\cdot\nabla_H\partial_zW_\varepsilon\partial_zW_\varepsilon-\partial_zv\cdot\nabla_HW_\varepsilon\partial_zW_\varepsilon\\
&\qquad\qquad+w\partial_zW_\varepsilon\Delta_HW_\varepsilon-\frac{1}{2}\partial_zw|\partial_zW_\varepsilon|^2\Big)d\Omega\\
=&\varepsilon^2\int_\Omega\Big(v\cdot\nabla_HW_\varepsilon\Delta_HW_\varepsilon+\nabla_H\cdot v|\partial_zW_\varepsilon|^2\Big)d\Omega\\
&+\varepsilon^2\int_\Omega\left[\left(\partial_zv\cdot\nabla_H\int_{-1}^z\nabla_H\cdot V_\varepsilon dz'\right)\partial_zW_\varepsilon-\left(\int_{-1}^z\nabla_H\cdot vdz'\right)\partial_zW_\varepsilon\Delta_HW_\varepsilon\right]d\Omega\\
\leq&C\varepsilon^2\int_M\left(\int_{-1}^1\big(|v|+|\partial_zv|\big)dz\right)\left(
\int_{-1}^1|\nabla_HW_\varepsilon||\Delta_HW_\varepsilon|dz\right)dM\\
&+C\varepsilon^2\int_M\left(\int_{-1}^1\big(|\nabla_Hv|+|\nabla_H\partial_zv|\big)dz\right)\left(
\int_{-1}^1|\partial_zW_\varepsilon|^2dz\right)dM\\
&+C\varepsilon^2\int_M\left(\int_{-1}^1|\nabla_H^2V_\varepsilon|dz\right)\left(
\int_{-1}^1|\partial_zv||\partial_zW_\varepsilon|dz\right)dM\\
&+C\varepsilon^2\int_M\left(\int_{-1}^1|\nabla_Hv|dz\right)\left(
\int_{-1}^1|\partial_zW_\varepsilon||\Delta_HW_\varepsilon|dz\right)dM\\
\leq&C\varepsilon^2\|\Delta_HW_\varepsilon\|_2\|\nabla_HW_\varepsilon\|_2^{\frac{1}{2}}\|\Delta_HW_\varepsilon\|_2^{\frac{1}{2}}\\
&\qquad\times\Big[\|\partial_zv\|_2^{\frac{1}{2}}(\|\partial_zv\|_2^{\frac{1}{2}}+\|\nabla_H\partial_zv\|_2^{\frac{1}{2}})+\|v\|_2^{\frac{1}{2}}(\|v\|_2^{\frac{1}{2}}+\|\nabla_Hv\|_2^{\frac{1}{2}})\Big]\\
&+C\varepsilon^2(\|\nabla_H\partial_zv\|_2+\|\nabla_Hv\|_2)\|\partial_zW_\varepsilon\|_2(\|\partial_zW_\varepsilon\|_2+\|\nabla_H\partial_zW_\varepsilon\|_2)\\
&+C\varepsilon^2\|\Delta_HV_\varepsilon\|_2\|\partial_zv\|_2^{\frac{1}{2}}(\|\partial_zv\|_2^{\frac{1}{2}}+\|\nabla_H\partial_zv\|_2^{\frac{1}{2}})\\
&\qquad\qquad\times\|\partial_zW_\varepsilon\|_2^{\frac{1}{2}}(\|\partial_zW_\varepsilon\|_2^{\frac{1}{2}}+\|\nabla_H\partial_zW_\varepsilon\|_2^{\frac{1}{2}})\\
&+C\varepsilon^2\|\Delta_HW_\varepsilon\|_2\|\nabla_Hv\|_2^{\frac{1}{2}}\|\Delta_Hv\|_2^{\frac{1}{2}}\\
&\qquad\qquad\times\|\partial_zW_\varepsilon\|_2^{\frac{1}{2}}(\|\partial_zW_\varepsilon\|_2^{\frac{1}{2}}+\|\nabla_H\partial_zW_\varepsilon\|_2^{\frac{1}{2}})\\
\leq&\frac{\varepsilon^2}{16}\|\nabla\nabla_HW_\varepsilon\|_2^2+\frac{1}{32}\|\nabla\nabla_HV_\varepsilon\|_2^2+C\varepsilon^2\|\nabla W_\varepsilon\|_2^2(\|v\|_{H^2}^4+1),
\end{align*}
and
\begin{align*}
&\varepsilon^2\int_\Omega \big(u\cdot\nabla w\big)\Delta W_\varepsilon~d\Omega\\
=&\varepsilon^2\int_\Omega\Big(\big(v\cdot\nabla_Hw\big)\Delta_HW_\varepsilon-\big(\partial_zv\cdot\nabla_Hw\big)\partial_zW_\varepsilon-\big(v\cdot\nabla_H\partial_zw\big)\partial_zW_\varepsilon\\
&\qquad\qquad+w\partial_zw\Delta_HW_\varepsilon-|\partial_zw|^2\partial_zW_\varepsilon-w\partial_z^2w\partial_zW_\varepsilon\Big)d\Omega\\
=&\varepsilon^2\int_\Omega\left[-\left(v\cdot\nabla_H\int_{-1}^z\nabla_H\cdot vdz'\right)\Delta_HW_\varepsilon+\left(\partial_zv\cdot\nabla_H\int_{-1}^z\nabla_H\cdot vdz'\right)\partial_zW_\varepsilon\right]d\Omega\\
&+\varepsilon^2\int_\Omega\left[\Big(v\cdot\nabla_H(\nabla_H\cdot v)\Big)\partial_zW_\varepsilon+\left(\int_{-1}^z\nabla_H\cdot vdz'\right)\nabla_H\cdot v\Delta_HW_\varepsilon\right]d\Omega\\
&+\varepsilon^2\int_\Omega\left[|\nabla_H\cdot v|^2\nabla_H\cdot V_\varepsilon-\left(\int_{-1}^z\nabla_H\cdot vdz'\right)\nabla_H\cdot\partial_zv\partial_zW_\varepsilon\right]d\Omega\\
\leq&C\varepsilon^2\int_M\left(\int_{-1}^1|\nabla_H^2v|dz\right)\left(\int_{-1}^1|v||\Delta_HW_\varepsilon|dz\right)dM\\
&+C\varepsilon^2\int_M\left(\int_{-1}^1|\nabla_H^2v|dz\right)\left(\int_{-1}^1|\partial_zv||\partial_zW_\varepsilon|dz\right)dM\\
&+C\varepsilon^2\int_M\left(\int_{-1}^1\big(|v|+|\partial_zv|\big)dz\right)\left(
\int_{-1}^1|\nabla_H^2v||\partial_zW_\varepsilon|dz\right)dM\\
&+C\varepsilon^2\int_M\left(\int_{-1}^1|\nabla_Hv|dz\right)\left(\int_{-1}^1|\nabla_Hv||\Delta_HW_\varepsilon|dz\right)dM\\
&+C\varepsilon^2\int_M\left(\int_{-1}^1\big(|\nabla_HV_\varepsilon|+|\nabla_H\partial_zV_\varepsilon|\big)dz\right)
\left(\int_{-1}^1|\nabla_Hv|^2dz\right)dM\\
&+C\varepsilon^2\int_M\left(\int_{-1}^1|\nabla_Hv|dz\right)\left(\int_{-1}^1|\nabla_H\partial_zv||\partial_zW_\varepsilon|dz\right)dM\\
\leq&C\varepsilon^2\|\Delta_HW_\varepsilon\|_2\|v\|_2^{\frac{1}{2}}(\|v\|_2^{\frac{1}{2}}+\|\nabla_Hv\|_2^{\frac{1}{2}})\|\Delta_Hv\|_2^{\frac{1}{2}}\|\nabla_H\Delta_Hv\|_2^{\frac{1}{2}}\\
&+C\varepsilon^2\|\Delta_Hv\|_2\|\partial_zW_\varepsilon\|_2^{\frac{1}{2}}(\|\partial_zW_\varepsilon\|_2^{\frac{1}{2}}+\|\nabla_H\partial_zW_\varepsilon\|_2^{\frac{1}{2}})\\
&\qquad\qquad\times\Big[\|\partial_zv\|_2^{\frac{1}{2}}(\|\partial_zv\|_2^{\frac{1}{2}}+\|\nabla_H\partial_zv\|_2^{\frac{1}{2}})+\|v\|_2^{\frac{1}{2}}(\|v\|_2^{\frac{1}{2}}+\|\nabla_Hv\|_2^{\frac{1}{2}})\Big]\\
&+C\varepsilon^2\|\Delta_HW_\varepsilon\|_2\|\nabla_Hv\|_2\|\Delta_Hv\|_2\\
&+C\varepsilon^2(\|\nabla_H\partial_zV_\varepsilon\|_2+\|\nabla_HV_\varepsilon\|_2)\|\nabla_Hv\|_2\|\Delta_Hv\|_2\\
&+C\varepsilon^2\|\nabla_H\partial_zv\|_2\|\partial_zW_\varepsilon\|_2^{\frac{1}{2}}(\|\partial_zW_\varepsilon\|_2^{\frac{1}{2}}+\|\nabla_H\partial_zW_\varepsilon\|_2^{\frac{1}{2}})
\|\nabla_Hv\|_2^{\frac{1}{2}}\|\Delta_Hv\|_2^{\frac{1}{2}}\\
\leq&\frac{\varepsilon^2}{16}\|\nabla\nabla_HW_\varepsilon\|_2^2+\frac{1}{32}\|\nabla\nabla_HV_\varepsilon\|_2^2\\
&+C\|\nabla V_\varepsilon\|_2^2+C\varepsilon^2\|\nabla W_\varepsilon\|_2^2+C\varepsilon^2\|v\|_{H^2}^3(\|v\|_{H^2}+\|\nabla_Hv\|_{H^2}).
\end{align*}
By the H$\ddot{o}$lder inequality, the incompressibility condition, and integrating by parts, one can obtain
\begin{equation*}
\begin{aligned}
&\varepsilon^2\int_\Omega\Big(\partial_t w\Delta W_\varepsilon-\Delta_Hw\Delta W_\varepsilon-\varepsilon^{\alpha-2}\partial_z^2w\Delta W_\varepsilon\Big)d\Omega\\
=&\varepsilon^2\int_\Omega\partial_tw\Delta_HW_\varepsilon d\Omega-\varepsilon^2\int_\Omega\partial_t\partial_zw\partial_zW_\varepsilon d\Omega-\varepsilon^2\int_\Omega\Delta_Hw\Delta_HW_\varepsilon d\Omega\\
&+\varepsilon^2\int_\Omega\Delta_H\partial_zw\partial_zW_\varepsilon d\Omega-\varepsilon^\alpha\int_\Omega\partial_z^2w\Delta_HW_\varepsilon d\Omega+\varepsilon^\alpha\int_\Omega\partial_z^3w\partial_zW_\varepsilon d\Omega\\
\leq&\frac{\varepsilon^2}{16}\|\Delta_HW_\varepsilon\|_2^2+C\varepsilon^2(\|\partial_tv\|_{H^1}^2+\|\nabla_Hv\|_{H^2}^2)+C\varepsilon^2\|\partial_zW_\varepsilon\|_2^2.
\end{aligned}
\end{equation*}
Now, collecting the above estimates yield
\begin{equation}
\begin{aligned}
&\frac{1}{2}\frac{d}{dt}\varepsilon^2\|\nabla W_\varepsilon\|_2^2+\frac{11}{16}\Big(\varepsilon^2\|\nabla\nabla_HW_\varepsilon\|_2^2+\varepsilon^\alpha\|\partial_z\nabla W_\varepsilon\|_2^2\Big)+\int_\Omega \partial_zP_\varepsilon\Delta W_\varepsilon~d\Omega\\
\leq&C\|\nabla V_\varepsilon\|_2^2(\|\nabla V_\varepsilon\|_2^2+\|\nabla\nabla_HV_\varepsilon\|_2^2)+\frac{1}{8}\|\nabla\nabla_HV_\varepsilon\|_2^2\\
&+C\varepsilon^2\|\nabla W_\varepsilon\|_2^2(\varepsilon^2\|\nabla W_\varepsilon\|_2^2+\varepsilon^2\|\nabla\nabla_HW_\varepsilon\|_2^2)\\
&+C\|V_\varepsilon\|_2^2(\|V_\varepsilon\|_2^2+\|\nabla_HV_\varepsilon\|_2^2)+C(\|\nabla V_\varepsilon\|_2^2+\varepsilon^2\|\nabla W_\varepsilon\|_2^2)(\|v\|_{H^2}^4+1)\\
&+C(\varepsilon^2\| W_\varepsilon\|_2^2+\|V_\varepsilon\|_2^2)(\|v\|_{H^2}^4+\|v\|_{H^2}^2\|\nabla_Hv\|_{H^2}^2+1)\\
&+C\varepsilon^2\|v\|_{H^2}^3(\|v\|_{H^2}+\|\nabla_Hv\|_{H^2})
+C\varepsilon^2(\|\partial_tv\|_{H^1}^2+\|\nabla_Hv\|_{H^2}^2).\label{22}
\end{aligned}
\end{equation}
Combining \eqref{21}, \eqref{22} and by Proposition \ref{pro4},  one gets
\begin{equation*}
\begin{aligned}
&\frac{1}{2}\frac{d}{dt}\Big(\varepsilon^2\|\nabla W_\varepsilon\|_2^2+\|\nabla V_\varepsilon\|_2^2\Big)\\
&\qquad+\frac{5}{8}\Big(\varepsilon^2\|\nabla\nabla_HW_\varepsilon\|_2^2+\varepsilon^\alpha\|\partial_z\nabla W_\varepsilon\|_2^2+\|\nabla\nabla_HV_\varepsilon\|_2^2+\varepsilon^{\alpha-2}\|\nabla\partial_zV_\varepsilon\|_2^2\Big)\\
\leq&C_1(\|\nabla V_\varepsilon\|_2^2+\varepsilon^2\|\nabla W_\varepsilon\|_2^2)(\|\nabla V_\varepsilon\|_2^2+\|\nabla\nabla_HV_\varepsilon\|_2^2+\varepsilon^2\|\nabla W_\varepsilon\|_2^2+\varepsilon^2\|\nabla\nabla_HW_\varepsilon\|_2^2)\\
&+C(\varepsilon^2\|\nabla W_\varepsilon\|_2^2+\|\nabla V_\varepsilon\|_2^2)(G^2(t)+1)+C\|V_\varepsilon\|_2^2(\|V_\varepsilon\|_2^2+\|\nabla_HV_\varepsilon\|_2^2)\\
&+C(\|V_\varepsilon\|_2^2+\varepsilon^2\| W_\varepsilon\|_2^2)(G^2(t)+G(t)\|\nabla_Hv\|_{H^2}^2+1)\\
&+C\varepsilon^2G^{\frac{3}{2}}(t)(G^{\frac{1}{2}}(t)+\|\nabla_Hv\|_{H^2})
+C\varepsilon^{\alpha-2}G(t)+C\varepsilon^2(\|\partial_tv\|_{H^1}^2+\|\nabla_Hv\|_{H^2}^2),
\end{aligned}
\end{equation*}
from which, by the assumption $\sup_{0\leq s\leq t}\big(\|\nabla V_\varepsilon\|_2^2+\varepsilon^2\|\nabla W_\varepsilon\|_2^2\big)(s)\leq\sigma^2$, letting $\sigma^2=\frac{1}{16C_1}$, and recalling (\ref{***}), one can see
\begin{equation*}
\begin{aligned}
&\frac{d}{dt}\Big(\varepsilon^2\|\nabla W_\varepsilon\|_2^2+\|\nabla V_\varepsilon\|_2^2\Big)\\
&\qquad+\Big(\varepsilon^2\|\nabla\nabla_HW_\varepsilon\|_2^2+\varepsilon^\alpha\|\partial_z\nabla W_\varepsilon\|_2^2+\|\nabla\nabla_HV_\varepsilon\|_2^2+\varepsilon^{\alpha-2}\|\nabla\partial_zV_\varepsilon\|_2^2\Big)\\
\leq&C(\varepsilon^2\|\nabla W_\varepsilon\|_2^2+\|\nabla V_\varepsilon\|_2^2)(G^2(t)+K_1(t)\varepsilon^\beta+1)\\
&+CK_1(t)\varepsilon^\beta[K_1(t)\varepsilon^\beta+G^2(t)+G(t)\|\nabla_Hv\|_{H^2}^2+1]\\
&+C\varepsilon^\beta(G^3(t)+\|\partial_tv\|_{H^1}^2+\|\nabla_Hv\|_{H^2}^2+1).
\end{aligned}
\end{equation*}
Recalling $(V_\varepsilon,W_\varepsilon)|_{t=0}=0$, it follows from the Gronwall inequality and Proposition \ref{pro4} that
\begin{equation*}
\begin{aligned}
&\sup_{0\leq s\leq t}\big(\|\nabla V_\varepsilon\|_2^2+\varepsilon^2\|\nabla W_\varepsilon\|_2^2\big)(s)\\
&\qquad+\int_0^{t}\Big(\|\nabla\nabla_HV_\varepsilon\|_2^2+\varepsilon^2\|\nabla\nabla_HW_\varepsilon\|_2^2+\varepsilon^{\alpha-2}\|\nabla\partial_zV_\varepsilon\|_2^2+\varepsilon^\alpha\|\nabla\partial_zW_\varepsilon\|_2^2\Big)ds\\
\leq&C\varepsilon^\beta e^{Ct(G^2(t)+K_1(t)\varepsilon^\beta+1)}\Big[t\big(K_1^2(t)+G^4(t)+1\big)+K_1(t)G(t)+1\Big]
:=K_2(t)\varepsilon^\beta,
\end{aligned}
\end{equation*}
proving the conclusion.
\end{proof}

The next proposition shows that the smallness condition of $(\nabla_HV_\varepsilon,\varepsilon W_\varepsilon)$ in Proposition \ref{pro5.1} holds for any finite time $T>0$ provided $\varepsilon\in(0,\varepsilon_T)$, where $\varepsilon_T$ is a positive constant depending on $T$. As a result,  the local strong solution $(v_\varepsilon,w_\varepsilon)$ of \eqref{2} exists in $[0,T]$ for $\varepsilon\in(0,\varepsilon_T)$.
\begin{proposition}
\label{pro5.2}
Let $T_\varepsilon^*$ be the maximal existence time of the unique local strong solution $(v_\varepsilon,w_\varepsilon)$ to \eqref{2}, subject to \eqref{3}--\eqref{5}. Then for any finite time $T>0$, there exists a positive constant $\varepsilon_T$ depending only on $\|v_0\|_{H^2}$, $T$, $L_1$ and $L_2$, such that $T<T_\varepsilon^*$, as long as $\varepsilon\in (0,\varepsilon_T)$, and that
\begin{equation*}
\begin{aligned}
&\sup_{0\leq t\leq T}\big(\| V_\varepsilon\|_{H^1}^2+\varepsilon^2\| W_\varepsilon\|_{H^1}^2\big)(t)\\
&\qquad+\int_0^{T}\Big(\|\nabla_HV_\varepsilon\|_{H^1}^2+\varepsilon^2\|\nabla_HW_\varepsilon\|_{H^1}^2+\varepsilon^{\alpha-2}\|\partial_zV_\varepsilon\|_{H^1}^2+\varepsilon^\alpha\|\partial_zW_\varepsilon\|_{H^1}^2\Big)dt\\
&\leq K_3(T)\varepsilon^\beta,
\end{aligned}
\end{equation*}
where $K_3(t)$ is a nonnegative continuously increasing function on $[0,\infty)$ determined only by $\|v_0\|_{H^2}$, $L_1$ and $L_2$.
\end{proposition}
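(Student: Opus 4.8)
The plan is to upgrade the conditional first-order estimate of Proposition \ref{pro5.1} into an unconditional one on $[0,T]$ by a continuation (bootstrap) argument, and in the same stroke to prove $T<T_\varepsilon^*$; the asserted inequality then follows by adding the first-order estimate to the basic energy estimate \eqref{***}. The delicate point is that Proposition \ref{pro5.1} presupposes the smallness of $\|\nabla V_\varepsilon\|_2^2+\varepsilon^2\|\nabla W_\varepsilon\|_2^2$, which is precisely the kind of bound it produces; this loop is closed by making $\varepsilon$ so small that the output $K_2(T)\varepsilon^\beta$ lies strictly below the threshold $\sigma^2$.

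First I would fix $T>0$, let $\sigma$ be the constant from Proposition \ref{pro5.1}, and set
\[
\tau:=\sup\Big\{t\in[0,T_\varepsilon^*)\ :\ \sup_{0\le s\le t}\big(\|\nabla V_\varepsilon\|_2^2+\varepsilon^2\|\nabla W_\varepsilon\|_2^2\big)(s)\le\sigma^2\Big\}.
\]
Since $(V_\varepsilon,W_\varepsilon)|_{t=0}=0$ and $(V_\varepsilon,W_\varepsilon)\in C([0,T_\varepsilon^*);H^1(\Omega))$, continuity gives $\tau>0$. Because $K_2$ is a continuously increasing function of $t$ determined only by $\|v_0\|_{H^2}$, $L_1$, $L_2$, I would then choose
\[
\varepsilon_T:=\Big(\tfrac{\sigma^2}{2K_2(T)}\Big)^{1/\beta},
\]
so that $K_2(T)\varepsilon^\beta\le\sigma^2/2$ for every $\varepsilon\in(0,\varepsilon_T)$; this $\varepsilon_T$ depends only on $\|v_0\|_{H^2}$, $T$, $L_1$, $L_2$, as required.

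The heart of the proof is to show $T<T_\varepsilon^*$ by excluding the alternative $T_\varepsilon^*\le T$. For every $t<\tau$ the smallness hypothesis holds on $[0,t]$, so Proposition \ref{pro5.1} yields $\sup_{0\le s\le t}(\|\nabla V_\varepsilon\|_2^2+\varepsilon^2\|\nabla W_\varepsilon\|_2^2)(s)\le K_2(t)\varepsilon^\beta$. If $\tau<T_\varepsilon^*$ and $\tau\le T$, then by continuity the supremum over $[0,\tau]$ equals $\sigma^2$, whereas the preceding bound forces it to be at most $K_2(\tau)\varepsilon^\beta\le K_2(T)\varepsilon^\beta\le\sigma^2/2$, a contradiction; hence in this case necessarily $\tau>T$. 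The only remaining possibility under $T_\varepsilon^*\le T$ is $\tau=T_\varepsilon^*$, in which case the smallness holds throughout $[0,T_\varepsilon^*)$ and Proposition \ref{pro5.1} gives a uniform bound on $\|\nabla V_\varepsilon\|_2^2+\varepsilon^2\|\nabla W_\varepsilon\|_2^2$ there. Together with \eqref{***} and the $H^2$-bounds on $(v,w)$ furnished by Proposition \ref{pro4}, this controls $\|V_\varepsilon\|_{H^1}$ and $\varepsilon\|W_\varepsilon\|_{H^1}$, hence—for the fixed value of $\varepsilon$, where the weight $\varepsilon^{-1}$ is merely a finite constant—controls $\|u_\varepsilon(t)\|_{H^1}=\|(v_\varepsilon,w_\varepsilon)(t)\|_{H^1}$ uniformly on $[0,T_\varepsilon^*)$. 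By the blow-up criterion for strong solutions of the Navier-Stokes system \eqref{2} (see \cite{Con,Tem}), a finite $T_\varepsilon^*$ with such a bound is impossible, contradicting the maximality of $T_\varepsilon^*$. Therefore $T<T_\varepsilon^*$ and, in particular, $\sup_{0\le s\le T}(\|\nabla V_\varepsilon\|_2^2+\varepsilon^2\|\nabla W_\varepsilon\|_2^2)(s)\le\sigma^2/2<\sigma^2$.

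With the smallness condition now valid unconditionally on $[0,T]$, Proposition \ref{pro5.1} applies on $[0,T]$; adding its conclusion to \eqref{***}, and splitting each $H^1$-norm into its $L^2$ part (controlled by \eqref{***}) and its gradient part (controlled by Proposition \ref{pro5.1}), yields the asserted estimate with $K_3(T):=K_1(T)+K_2(T)$. The main obstacle is precisely this bootstrap mechanism: one must simultaneously justify the smallness hypothesis of Proposition \ref{pro5.1} and convert the resulting a priori control of $(V_\varepsilon,W_\varepsilon)$ into a genuine lower bound on the existence time $T_\varepsilon^*$ through the breakdown criterion for $u_\varepsilon$; the continuity of $K_2$ in $T$ together with the $\varepsilon^\beta$ scaling of the right-hand sides is what makes the choice of $\varepsilon_T$ possible.
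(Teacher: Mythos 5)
Your proof is correct and follows essentially the same route as the paper: the identical choice $\varepsilon_T=\bigl(\sigma^2/(2K_2(T))\bigr)^{1/\beta}$, the same continuation argument closing the smallness hypothesis of Proposition \ref{pro5.1}, a contradiction at a hypothetical finite maximal time via the blow-up criterion (which the paper invokes implicitly through Proposition \ref{pro4}), and the same final combination $K_3=K_1+K_2$. Your only deviation is making explicit how the difference bounds, with the fixed-$\varepsilon$ weight, control $\|u_\varepsilon\|_{H^1}$ before applying the breakdown criterion---a useful clarification, not a different argument.
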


\begin{proof}
Set $T^{**}_\varepsilon=\min\{T,T^*_\varepsilon\}$. Then, by (\ref{***}), one has
\begin{equation}
\begin{aligned}
&\sup_{0\leq t\leq T^{**}_\varepsilon}\big(\|V_\varepsilon\|_2^2+\varepsilon^2\|W_\varepsilon\|_2^2\big)(t)\\
&\qquad+\int_0^{T^{**}_\varepsilon}\Big(\|\nabla_HV_\varepsilon\|_2^2+\varepsilon^2\|\nabla_HW_\varepsilon\|_2^2+\varepsilon^{\alpha-2}\|\partial_zV_\varepsilon\|_2^2+\varepsilon^\alpha\|\partial_zW_\varepsilon\|_2^2\Big)dt\\
&\leq K_1(T)\varepsilon^\beta.\label{23}
\end{aligned}
\end{equation}

Let $\sigma$ be the constant in Proposition \ref{pro5.1}. Define
\begin{equation*}
t_\varepsilon^*:=\sup\Big\{t\in(0,T^{**}_\varepsilon)\Big|\sup_{0\leq s\leq t}(\|\nabla V_\varepsilon\|_2^2+\varepsilon^2\|\nabla W_\varepsilon\|_2^2)\leq \sigma^2\Big\}.
\end{equation*}
By Proposition \ref{pro5.1}, one can obtain
\begin{equation}
\begin{aligned}
&\sup_{0\leq s\leq t}\big(\|\nabla V_\varepsilon\|_2^2+\varepsilon^2\|\nabla W_\varepsilon\|_2^2\big)(s)\\
&~~~+\int_0^{t}\Big(\|\nabla\nabla_HV_\varepsilon\|_2^2+\varepsilon^2\|\nabla\nabla_HW_\varepsilon\|_2^2+\varepsilon^{\alpha-2}\|\nabla\partial_zV_\varepsilon\|_2^2+\varepsilon^\alpha\|\nabla\partial_zW_\varepsilon\|_2^2\Big)ds\\
&\leq K_2(t)\varepsilon^\beta\leq K_2(T)\varepsilon^\beta\leq \frac{\sigma^2}{2}, \label{24}
\end{aligned}
\end{equation}
for any $t\in[0,t^*_\varepsilon)$ and for any $\varepsilon\in(0,\varepsilon_T)$, where $\varepsilon_T=\Big(\frac{\sigma^2}{2K_2(T)}\Big)^{\frac{1}{\beta}}$. Therefore,

\begin{equation}
\sup_{0\leq t< t_\varepsilon^*}\big(\|\nabla V_\varepsilon\|_2^2+\varepsilon^2\|\nabla W_\varepsilon\|_2^2\big)(t)\leq
\frac{\sigma^2}{2},\qquad\forall\varepsilon\in(0,\varepsilon_T).\label{25}
\end{equation}
By the definition of $t_\varepsilon^*$, this implies $t_\varepsilon^*=T^{**}_\varepsilon$ and, consequently, \eqref{24} holds for any $t\in[0,T_\varepsilon^{**})$.

We claim that $T^{**}_\varepsilon\geq T$ for any $\varepsilon\in(0,\varepsilon_T)$. Assume in contradiction that  $T_\varepsilon^{**}<T$, i.e., $T^*_\varepsilon<T$. This implies the maximal existence time of $(v_\varepsilon,w_\varepsilon)$ is finite and, consequently, recalling Proposition \ref{pro4}, it must have
\begin{equation*}
\limsup_{t\rightarrow (T_\varepsilon^{*})^-}(\|\nabla V_\varepsilon\|_2^2+\varepsilon^2\|\nabla W_\varepsilon\|_2^2)=\infty,
\end{equation*}
which contradicts to \eqref{24}. This contradiction implies $T^{**}_\varepsilon\geq T$ and thus $T^{*}_\varepsilon\geq T$. Thanks this and combining \eqref{23} and \eqref{24}, one obtains
\begin{equation*}
\begin{aligned}
&\sup_{0\leq t\leq T}\big(\| V_\varepsilon\|_{H^1}^2+\varepsilon^2\| W_\varepsilon\|_{H^1}^2\big)(t)\\
&~~~+\int_0^{T}\Big(\|\nabla_HV_\varepsilon\|_{H^1}^2+\varepsilon^2\|\nabla_HW_\varepsilon\|_{H^1}^2+\varepsilon^{\alpha-2}\|\partial_zV_\varepsilon\|_{H^1}^2+\varepsilon^\alpha\|\partial_zW_\varepsilon\|_{H^1}^2\Big)dt\\
&\leq (K_1(T)+K_2(T))\varepsilon^\beta:=K_3(T)\varepsilon^\beta.
\end{aligned}
\end{equation*}
This proves the conclusion.
\end{proof}

\begin{proof}[Proof of Theorem \ref{thm2}.]

For any finite time $T>0$, let $\varepsilon_T$ be the constant in Proposition \ref{pro5.2}. Then, by Proposition \ref{pro5.2}, for any $\varepsilon\in(0,\varepsilon_T)$, the scaled Navier-Stokes system (\ref{2})--(\ref{5}) exists a unique strong solution $(v_\varepsilon,w_\varepsilon)$ in $[0,T]$. While the following estimate holds
\begin{equation*}
\begin{aligned}
&\sup_{0\leq t\leq T}\big(\| V_\varepsilon\|_{H^1}^2+\varepsilon^2\| W_\varepsilon\|_{H^1}^2\big)(t)\\
&\qquad+\int_0^{T}\Big(\|\nabla_HV_\varepsilon\|_{H^1}^2+\varepsilon^2\|\nabla_HW_\varepsilon\|_{H^1}^2+\varepsilon^{\alpha-2}\|\partial_zV_\varepsilon\|_{H^1}^2+\varepsilon^\alpha\|\partial_zW_\varepsilon\|_{H^1}^2\Big)dt\\
&\leq K_3(T)\varepsilon^\beta.
\end{aligned}
\end{equation*}
which is exactly estimate stated in Theorem \ref{thm2}. The convergences are the direct corollaries of the above estimate. This completes the proof of Theorem \ref{thm2}.
\end{proof}

\smallskip
{\bf Acknowledgment.}
The work of J.L. was supported in part by the National
Natural Science Foundation of China (11971009, 11871005, and 11771156) and by
the Guangdong Basic and Applied Basic Research Foundation (2019A1515011621,
2020B1515310005, 2020B1515310002, and 2021A1515010247). The work of E.S.T. was
supported in part by the Einstein Stiftung/Foundation-Berlin, through the Einstein Visiting Fellow Program.

\bigskip

\end{document}